\title{Longest winning paths in Hex}
\author{Peter Selinger}
\date{Dalhousie University}
\begin{document}

\maketitle

\begin{abstract}
  We answer the question: what is the longest winning path on a Hex
  board of size $n\times n$?
\end{abstract}

\section{Winning paths in Hex}

The game of Hex is played between two players on a rhombic board made
of hexagonal cells, like this:
\[
\begin{hexboard}[scale=0.7]
  \rotation{-30}
  \board(5,5)
\end{hexboard}
\]
The players, Black and White, take turns putting a stone of their
color on some cell of the board. Stones are never moved or
removed. Black's goal is to connect the top edge to the bottom edge
with black stones, and White's goal is to connect the left edge to
the right edge with white stones.  Hex has many interesting
properties, among which is the fact that there is always exactly one
winner: the game cannot end in a draw {\cite{Gardner}}.

In this paper, we are interested in the length of possible winning
paths, answering a question of Tahir Yusufaly {\cite{Yusufaly}}.
Without loss of generality, we take Black's point of view. By a
\emph{winning connection}, we mean a set of black stones connecting
the top edge to the bottom edge. By a \emph{winning path}, we mean a
winning connection that is minimal, in the sense that none of its
proper subsets is a winning connection. The \emph{length} of a winning
path is the number of stones in it, and a winning path is
\emph{optimal} for a given board size if it is as long as
possible. Note that the question we are interested in is the existence
of winning paths, not whether such paths could occur in an actual
game.

Obviously, on a board of size $5\times 5$, Black needs at least 5
stones to make a winning path, since Black needs at least one stone in
each row, as shown in Figure~\ref{fig:ex-path}(a). On the other hand,
the longest possible winning path has length 11. There are 23
different winning paths of length 11, and a few of them are shown in
Figure~\ref{fig:ex-path}(b). The set of stones shown in
Figure~\ref{fig:ex-path}(c) is a winning connection, but not a winning
path, because it is not minimal: Black can remove 8 of the
stones and still have a winning connection.

\begin{figure}
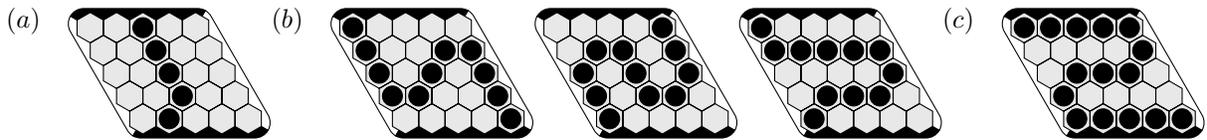

  \[
  (a)\quad
  \begin{hexboard}[scale=0.5, baseline={(0,0)}]
    \rotation{-30}
    \board(5,5)
    \black(3,1)
    \black(3,2)
    \black(3,3)
    \black(3,4)
    \black(2,5)
  \end{hexboard}
  (b)\quad
  \begin{hexboard}[scale=0.5, baseline={(0,0)}]
    \rotation{-30}
    \board(5,5)
    \black(1,1)
    \black(1,2)
    \black(1,3)
    \black(1,4)
    \black(2,4)
    \black(3,3)
    \black(4,2)
    \black(5,2)
    \black(5,3)
    \black(5,4)
    \black(5,5)
  \end{hexboard}
  \begin{hexboard}[scale=0.5, baseline={(0,0)}]
    \rotation{-30}
    \board(5,5)
    \black(5,1)
    \black(5,2)
    \black(5,3)
    \black(4,4)
    \black(3,4)
    \black(3,3)
    \black(3,2)
    \black(2,2)
    \black(1,3)
    \black(1,4)
    \black(1,5)
  \end{hexboard}
  \begin{hexboard}[scale=0.5, baseline={(0,0)}]
    \rotation{-30}
    \board(5,5)
    \black(1,1)
    \black(1,2)
    \black(2,2)
    \black(3,2)
    \black(4,2)
    \black(5,2)
    \black(5,3)
    \black(4,4)
    \black(3,4)
    \black(2,4)
    \black(1,5)
  \end{hexboard}
  (c)\quad
  \begin{hexboard}[scale=0.5, baseline={(0,0)}]
    \rotation{-30}
    \board(5,5)
    \black(1,1)
    \black(2,1)
    \black(3,1)
    \black(4,1)
    \black(5,1)
    \black(5,2)
    \black(4,3)
    \black(3,3)
    \black(2,3)
    \black(1,4)
    \black(1,5)
    \black(2,5)
    \black(3,5)
    \black(4,5)
    \black(5,5)
  \end{hexboard}
  \]  
  \caption{(a) A shortest winning path. (b) Some longest winning
    paths. (c) A non-minimal connection.}
  \label{fig:ex-path}
\end{figure}

For readers who like a good puzzle, consider the winning path for
a $10\times 10$ board that is shown in Figure~\ref{fig:tessel}(a).
This path has length 46 and is not optimal. Before reading on, try to
find a winning path of length 47.

We note that the problem of finding a longest winning path in Hex is
an instance of the {\em longest induced path problem}, which can be
asked of any graph. An induced path in a graph is a subset of the
vertices such that the induced subgraph is a path. Here, we are
interested in the specific version of this problem where the endpoints
of the path are fixed. In the case of a Hex board, the vertices are
the cells, as well as Black's two board edges. The edges are given by
adjacency. The longest induced path problem for a general graph is
NP-hard {\cite[p.~196]{GR1979}}; this remains true when the endpoints
are fixed. Nevertheless, finding good heuristic methods for solving
this problem remains a subject of current research; see
{\cite{MVPP2019}} for a recent contribution and additional references
and applications. It is therefore perhaps interesting that we can
solve this problem exactly in the case of a Hex grid.

\section{Upper bounds on the path length}

\subsection{Loose bounds}

Trivially, the length of a winning path on a board of size $n\times n$
is at most $n^2$. Less trivially, the length of a winning path is
bounded by $\frac{n^2+1}{2}$. Intuitively, it makes sense that only
``about half'' of the cells can be part of a winning path. A more
precise way of seeing this is as follows. Consider a winning path,
such as the one shown in Figure~\ref{fig:tessel}(a).  Add two columns
of empty cells to represent the left and right board edges, as in
Figure~\ref{fig:tessel}(b). Then create a triangular grid by
connecting the centers of cells, as in
Figure~\ref{fig:tessel}(c). This triangular grid consists of
$2(n+1)(n-1)$ triangles. We call it the \emph{unit grid} for an
$n\times n$-board, and we call the triangles the \emph{unit
  triangles}. Moreover we define the \emph{unit area} to be the area
of a unit triangle. Note that the stones sit on the vertices of the
unit grid. Next, we place each black stone inside a polygonal region
whose shape is determined by the position of the neighboring stones:
\[
\begin{hexboard}[scale=0.7]
  \def\smallradius{0.15}
  \rotation{-30}
  \begin{pgfonlayer}{hexes}
    \fill[domainpink] \coord(0,-1) -- \coord(-1,1) -- \coord(0,1) --
    \coord(1,-1) -- cycle;
    \draw[gridgray] \coord(0,-1) -- \coord(0,1);
    \draw[gridgray] \coord(1,-1) -- \coord(-1,1);
    \draw[gridgray] \coord(1,0) -- \coord(-1,0);
    \draw[gridgray] \coord(0,-1) -- \coord(1,-1) -- \coord(1,0) --
    \coord(0,1) -- \coord(-1,1) -- \coord(-1,0) -- cycle;
    \draw[domainred] \coord(0,-1) -- \coord(-1,1) -- \coord(0,1) --
    \coord(1,-1) -- cycle;
  \end{pgfonlayer}
  \smallblack(-1,0)
  \smallblack(0,0)
  \smallblack(1,0)

  \begin{scope}[shift={(3,0)}]
    \rotation{-30} 
    \begin{pgfonlayer}{hexes}
      \fill[domainpink] \coord(0,-1) -- \coord(-1,1) -- \coord(1,0) --
      \coord(1,-1) -- cycle;
      \draw[gridgray] \coord(0,-1) -- \coord(0,1);
      \draw[gridgray] \coord(1,-1) -- \coord(-1,1);
      \draw[gridgray] \coord(1,0) -- \coord(-1,0);
      \draw[gridgray] \coord(0,-1) -- \coord(1,-1) -- \coord(1,0) --
      \coord(0,1) -- \coord(-1,1) -- \coord(-1,0) -- cycle;
      \draw[domainred] \coord(0,-1) -- \coord(-1,1) -- \coord(1,0) --
      \coord(1,-1) -- cycle;
    \end{pgfonlayer}
    \smallblack(-1,0)
    \smallblack(0,0)
    \smallblack(0,1)
  \end{scope}

  \begin{scope}[shift={(6,0)}]
    \rotation{-30} 
    \begin{pgfonlayer}{hexes}
      \fill[domainpink] \coord(1,-1) -- \coord(-1,0) -- \coord(1,0) --
      \coord(1,-1) -- cycle;
      \draw[gridgray] \coord(0,-1) -- \coord(0,0);
      \draw[gridgray] \coord(1,-1) -- \coord(0,0);
      \draw[gridgray] \coord(1,0) -- \coord(-1,0);
      \draw[gridgray] \coord(0,-1) -- \coord(1,-1) -- \coord(1,0) --
      \coord(-1,0) -- cycle;
      \draw[domainred] \coord(1,-1) -- \coord(-1,0) -- \coord(1,0) --
      \coord(1,-1) -- cycle;
    \end{pgfonlayer}
    \smallblack(0,-1)
    \smallblack(0,0)
  \end{scope}
\end{hexboard}
\]
We call the region associated to each stone its \emph{domain}.  As
shown in Figure~\ref{fig:tessel}(d), the domains of all the stones in
a winning path are disjoint. Moreover, each domain occupies an area of
4 units, except for the domains of the boundary stones, which only
occupy an area of 2 units. So if there are $k$ stones in the path, we
have $4(k-1) \leq 2(n+1)(n-1)$, or equivalently $k\leq
\frac{n^2+1}{2}$, as claimed.

\begin{figure}
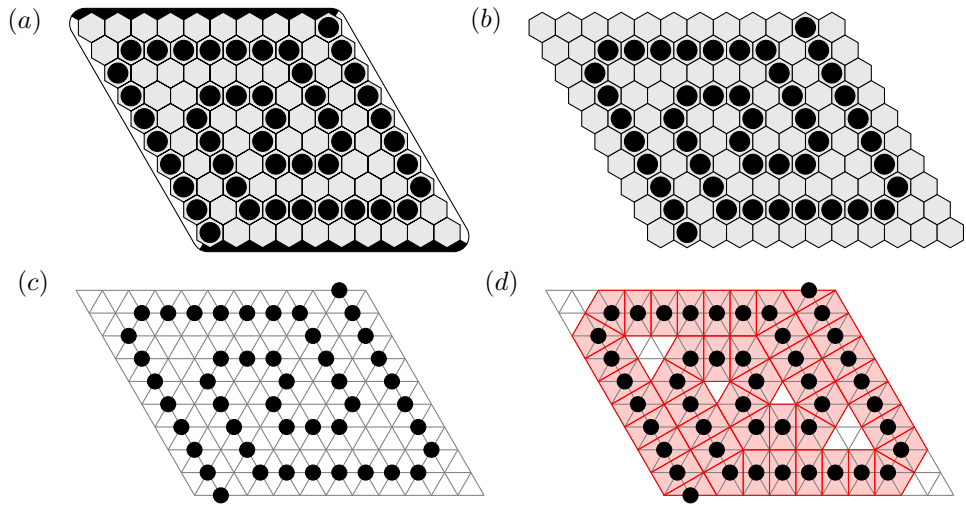

  \[
  (a)\quad
  \begin{hexboard}[scale=0.5,baseline={(0,0)}]
    \rotation{-30}
    \board(10,10)
    \black(10,1)\black(10,2)\black(10,3)\black(10,4)\black(10,5)
    \black(10,6)\black(10,7)\black(10,8)\black(9,9)\black(8,9)
    \black(7,9)\black(6,9)\black(5,9)\black(4,9)\black(3,9)
    \black(3,8)\black(3,7)\black(3,6)\black(3,5)\black(4,4)
    \black(5,4)\black(6,4)\black(6,5)\black(5,6)\black(5,7)
    \black(6,7)\black(7,7)\black(8,6)\black(8,5)\black(8,4)
    \black(8,3)\black(8,2)\black(7,2)\black(6,2)\black(5,2)
    \black(4,2)\black(3,2)\black(2,2)\black(1,3)\black(1,4)
    \black(1,5)\black(1,6)\black(1,7)\black(1,8)\black(1,9)
    \black(1,10)
  \end{hexboard}
  (b)\quad
  \begin{hexboard}[scale=0.5,baseline={(0,0)}]
    \rotation{-30}
    \foreach\i in {1,...,10} {
      \foreach\j in {0,...,11} {
      \hex(\j,\i)
      }
    }
    \black(10,1)\black(10,2)\black(10,3)\black(10,4)\black(10,5)
    \black(10,6)\black(10,7)\black(10,8)\black(9,9)\black(8,9)
    \black(7,9)\black(6,9)\black(5,9)\black(4,9)\black(3,9)
    \black(3,8)\black(3,7)\black(3,6)\black(3,5)\black(4,4)
    \black(5,4)\black(6,4)\black(6,5)\black(5,6)\black(5,7)
    \black(6,7)\black(7,7)\black(8,6)\black(8,5)\black(8,4)
    \black(8,3)\black(8,2)\black(7,2)\black(6,2)\black(5,2)
    \black(4,2)\black(3,2)\black(2,2)\black(1,3)\black(1,4)
    \black(1,5)\black(1,6)\black(1,7)\black(1,8)\black(1,9)
    \black(1,10)
  \end{hexboard}
  \]
  \[
  (c)\quad
  \begin{hexboard}[scale=0.5,baseline={(0,0)}]
    \rotation{-30}
    \begin{pgfonlayer}{hexes}
      \foreach\i in {1,...,10} {
        \draw[gridgray] \coord(0,\i) -- \coord(11,\i);
      }
      \foreach\j in {0,...,11} {
        \draw[gridgray] \coord(\j,1) -- \coord(\j,10);
      }
      \foreach\j in {1,...,9} {
        \draw[gridgray] \coord(\j,1) -- \coord(0,\j+1);
        \draw[gridgray] \coord(11-\j,10) -- \coord(11,10-\j);
      }
      \draw[gridgray] \coord(1,10) -- \coord(10,1);
    \end{pgfonlayer}
    \smallblack(10,1)\smallblack(10,2)\smallblack(10,3)
    \smallblack(10,4)\smallblack(10,5)\smallblack(10,6)
    \smallblack(10,7)\smallblack(10,8)\smallblack(9,9)
    \smallblack(8,9)\smallblack(7,9)\smallblack(6,9)
    \smallblack(5,9)\smallblack(4,9)\smallblack(3,9)
    \smallblack(3,8)\smallblack(3,7)\smallblack(3,6)
    \smallblack(3,5)\smallblack(4,4)\smallblack(5,4)
    \smallblack(6,4)\smallblack(6,5)\smallblack(5,6)
    \smallblack(5,7)\smallblack(6,7)\smallblack(7,7)
    \smallblack(8,6)\smallblack(8,5)\smallblack(8,4)
    \smallblack(8,3)\smallblack(8,2)\smallblack(7,2)
    \smallblack(6,2)\smallblack(5,2)\smallblack(4,2)
    \smallblack(3,2)\smallblack(2,2)\smallblack(1,3)
    \smallblack(1,4)\smallblack(1,5)\smallblack(1,6)
    \smallblack(1,7)\smallblack(1,8)\smallblack(1,9)
    \smallblack(1,10)
  \end{hexboard}
  (d)\quad
  \begin{hexboard}[scale=0.5,baseline={(0,0)}]
    \rotation{-30}
    \begin{pgfonlayer}{hexes}
      \foreach\i in {1,...,10} {
        \draw[gridgray] \coord(0,\i) -- \coord(11,\i);
      }
      \foreach\j in {0,...,11} {
        \draw[gridgray] \coord(\j,1) -- \coord(\j,10);
      }
      \foreach\j in {1,...,9} {
        \draw[gridgray] \coord(\j,1) -- \coord(0,\j+1);
        \draw[gridgray] \coord(11-\j,10) -- \coord(11,10-\j);
      }
      \draw[gridgray] \coord(1,10) -- \coord(10,1);
    \end{pgfonlayer}
    \region(10,1)(0,1)(0,2)(2,1)(2,1)
    \region(10,2)(0,1)(0,2)(2,1)(2,0)
    \region(10,3)(0,1)(0,2)(2,1)(2,0)
    \region(10,4)(0,1)(0,2)(2,1)(2,0)
    \region(10,5)(0,1)(0,2)(2,1)(2,0)
    \region(10,6)(0,1)(0,2)(2,1)(2,0)
    \region(10,7)(0,1)(0,2)(2,1)(2,0)
    \region(10,8)(0,1)(1,2)(2,1)(2,0)
    \region(9,9)(1,0)(0,2)(1,2)(2,1)
    \region(8,9)(1,0)(0,2)(1,2)(2,0)
    \region(7,9)(1,0)(0,2)(1,2)(2,0)
    \region(6,9)(1,0)(0,2)(1,2)(2,0)
    \region(5,9)(1,0)(0,2)(1,2)(2,0)
    \region(4,9)(1,0)(0,2)(1,2)(2,0)
    \region(3,9)(0,1)(0,2)(1,2)(2,0)
    \region(3,8)(0,1)(0,2)(2,1)(2,0)
    \region(3,7)(0,1)(0,2)(2,1)(2,0)
    \region(3,6)(0,1)(0,2)(2,1)(2,0)
    \region(3,5)(0,1)(0,2)(2,1)(1,0)
    \region(4,4)(0,1)(1,2)(2,0)(1,0)
    \region(5,4)(1,0)(0,2)(1,2)(2,0)
    \region(6,4)(1,0)(0,2)(2,1)(2,0)
    \region(6,5)(0,1)(1,2)(2,1)(2,0)
    \region(5,6)(0,1)(0,2)(2,1)(1,0)
    \region(5,7)(0,1)(0,2)(1,2)(2,0)
    \region(6,7)(1,0)(0,2)(1,2)(2,0)
    \region(7,7)(1,0)(0,2)(1,2)(2,1)
    \region(8,6)(0,1)(1,2)(2,1)(2,0)
    \region(8,5)(0,1)(0,2)(2,1)(2,0)
    \region(8,4)(0,1)(0,2)(2,1)(2,0)
    \region(8,3)(0,1)(0,2)(2,1)(2,0)
    \region(8,2)(1,0)(0,2)(2,1)(2,0)
    \region(7,2)(1,0)(0,2)(1,2)(2,0)
    \region(6,2)(1,0)(0,2)(1,2)(2,0)
    \region(5,2)(1,0)(0,2)(1,2)(2,0)
    \region(4,2)(1,0)(0,2)(1,2)(2,0)
    \region(3,2)(1,0)(0,2)(1,2)(2,0)
    \region(2,2)(0,1)(1,2)(2,0)(1,0)
    \region(1,3)(0,1)(0,2)(2,1)(1,0)
    \region(1,4)(0,1)(0,2)(2,1)(2,0)
    \region(1,5)(0,1)(0,2)(2,1)(2,0)
    \region(1,6)(0,1)(0,2)(2,1)(2,0)
    \region(1,7)(0,1)(0,2)(2,1)(2,0)
    \region(1,8)(0,1)(0,2)(2,1)(2,0)
    \region(1,9)(0,1)(0,2)(2,1)(2,0)
    \region(1,10)(0,1)(0,1)(2,1)(2,0)
  \end{hexboard}
  \]
  \caption{(a) A winning path for $10\times 10$. Can you find a longer
    one? (b) The same path, with the white edges replaced by columns
    of empty cells. (c) The triangular unit grid. (d) Each interior
    black stone occupies an area of 4 units.}
  \label{fig:tessel}
\end{figure}

\subsection{Wasted triangles}

Later in this section, we will derive better bounds on the path
length. To motivate how this will be done, first consider the unit
triangles in Figure~\ref{fig:tessel}(d) that are not covered by the
domain of any stone. We call them \emph{wasted triangles}. There are
exactly 18 wasted triangles in Figure~\ref{fig:tessel}(d). We also
note that the same wasted triangles can equivalently be observed in
Figure~\ref{fig:tessel}(b), where they correspond to triples of
pairwise adjacent empty cells. For a given winning path, let $n$ be the
board size, let $k$ be the length of the path, and let $t$ be the
number of wasted triangles. These three quantities are related by a
simple formula. Namely, by expressing the area of the unit grid in two
different ways, we get $4(k-1) + t = 2(n+1)(n-1)$, or equivalently,
\begin{equation}\label{eqn:k}
  k = \frac{n^2+1}{2} - \frac{t}{4}.
\end{equation}
It follows that for any fixed board size, maximizing the path length
is equivalent to minimizing the number of wasted triangles. Our
strategy for finding an upper bound on the path length will be to find
a lower bound on the number of wasted triangles.

\subsection{Wasted triangles near the corner}

It will be helpful to distinguish \emph{upward-pointing} unit
triangles ($\bigtriangleup$) from \emph{downward-pointing} ones
($\bigtriangledown$). The following lemma guarantees that there is at
least one upward-pointing wasted triangle near the top left corner of
the board.

\begin{figure}
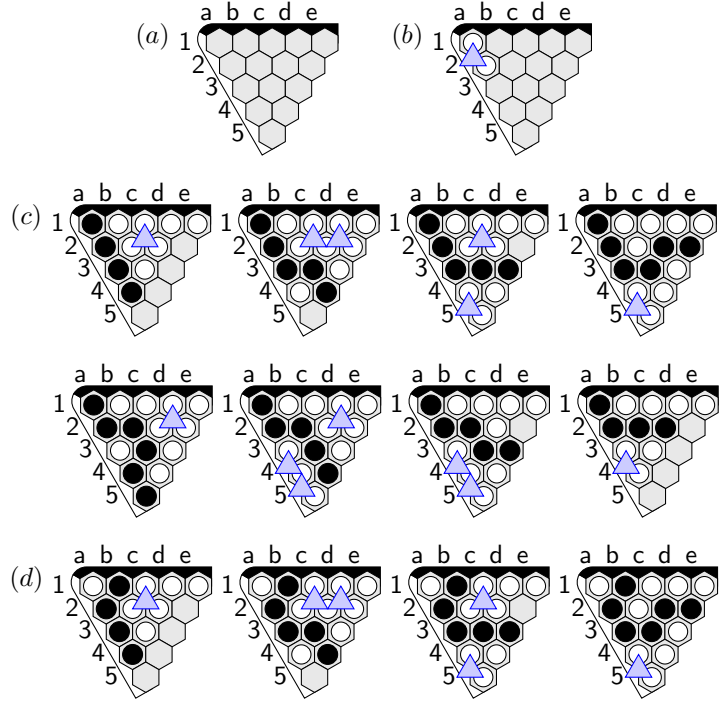

  \[
  (a)
  \begin{hexboard}[scale=0.5,baseline={(0,0)}]
    \rotation{-30}
    \foreach\i in {1,...,5} {\hex(\i,1)}
    \foreach\i in {1,...,4} {\hex(\i,2)}
    \foreach\i in {1,...,3} {\hex(\i,3)}
    \foreach\i in {1,...,2} {\hex(\i,4)}
    \foreach\i in {1,...,1} {\hex(\i,5)}
    \edge[\noobtusecorner](1,1)(1,5)
    \edge[\noobtusecorner](1,1)(5,1)
    \begin{scope}
      \tikzset{every node/.style={anchor=base,yshift=-0.6ex}}
      \cell(1.15,-0.3)\label{\lbl{a}}
      \cell(2.15,-0.3)\label{\lbl{b}}
      \cell(3.15,-0.3)\label{\lbl{c}}
      \cell(4.15,-0.3)\label{\lbl{d}}
      \cell(5.15,-0.3)\label{\lbl{e}}
      \cell(-0.3,1)\label{\lbl{1}}
      \cell(-0.3,2)\label{\lbl{2}}
      \cell(-0.3,3)\label{\lbl{3}}
      \cell(-0.3,4)\label{\lbl{4}}
      \cell(-0.3,5)\label{\lbl{5}}
    \end{scope}
  \end{hexboard}
  \qquad
  (b)
  \begin{hexboard}[scale=0.5,baseline={(0,0)}]
    \rotation{-30}
    \foreach\i in {1,...,5} {\hex(\i,1)}
    \foreach\i in {1,...,4} {\hex(\i,2)}
    \foreach\i in {1,...,3} {\hex(\i,3)}
    \foreach\i in {1,...,2} {\hex(\i,4)}
    \foreach\i in {1,...,1} {\hex(\i,5)}
    \edge[\noobtusecorner](1,1)(1,5)
    \edge[\noobtusecorner](1,1)(5,1)
    \begin{scope}
      \tikzset{every node/.style={anchor=base,yshift=-0.6ex}}
      \cell(1.15,-0.3)\label{\lbl{a}}
      \cell(2.15,-0.3)\label{\lbl{b}}
      \cell(3.15,-0.3)\label{\lbl{c}}
      \cell(4.15,-0.3)\label{\lbl{d}}
      \cell(5.15,-0.3)\label{\lbl{e}}
      \cell(-0.3,1)\label{\lbl{1}}
      \cell(-0.3,2)\label{\lbl{2}}
      \cell(-0.3,3)\label{\lbl{3}}
      \cell(-0.3,4)\label{\lbl{4}}
      \cell(-0.3,5)\label{\lbl{5}}
    \end{scope}
    \white(1,1)
    \white(1,2)
    \draw[wasted] \coord(1,2) -- \coord(1,1) -- \coord(0,2) -- cycle;
  \end{hexboard}
  \]
  \[
  (c)
  \begin{hexboard}[scale=0.5,baseline={(0,0)}]
    \rotation{-30}
    \foreach\i in {1,...,5} {\hex(\i,1)}
    \foreach\i in {1,...,4} {\hex(\i,2)}
    \foreach\i in {1,...,3} {\hex(\i,3)}
    \foreach\i in {1,...,2} {\hex(\i,4)}
    \foreach\i in {1,...,1} {\hex(\i,5)}
    \edge[\noobtusecorner](1,1)(1,5)
    \edge[\noobtusecorner](1,1)(5,1)
    \begin{scope}
      \tikzset{every node/.style={anchor=base,yshift=-0.6ex}}
      \cell(1.15,-0.3)\label{\lbl{a}}
      \cell(2.15,-0.3)\label{\lbl{b}}
      \cell(3.15,-0.3)\label{\lbl{c}}
      \cell(4.15,-0.3)\label{\lbl{d}}
      \cell(5.15,-0.3)\label{\lbl{e}}
      \cell(-0.3,1)\label{\lbl{1}}
      \cell(-0.3,2)\label{\lbl{2}}
      \cell(-0.3,3)\label{\lbl{3}}
      \cell(-0.3,4)\label{\lbl{4}}
      \cell(-0.3,5)\label{\lbl{5}}
    \end{scope}
    \black(1,1)
    \black(1,2)
    \black(1,3)
    \black(1,4)
    \white(2,1)
    \white(2,2)
    \white(2,3)
    \white(3,1)
    \white(3,2)
    \white(4,1)
    \white(5,1)
    \draw[wasted] \coord(3,2) -- \coord(3,1) -- \coord(2,2) -- cycle;
  \end{hexboard}
  \begin{hexboard}[scale=0.5,baseline={(0,0)}]
    \rotation{-30}
    \foreach\i in {1,...,5} {\hex(\i,1)}
    \foreach\i in {1,...,4} {\hex(\i,2)}
    \foreach\i in {1,...,3} {\hex(\i,3)}
    \foreach\i in {1,...,2} {\hex(\i,4)}
    \foreach\i in {1,...,1} {\hex(\i,5)}
    \edge[\noobtusecorner](1,1)(1,5)
    \edge[\noobtusecorner](1,1)(5,1)
    \begin{scope}
      \tikzset{every node/.style={anchor=base,yshift=-0.6ex}}
      \cell(1.15,-0.3)\label{\lbl{a}}
      \cell(2.15,-0.3)\label{\lbl{b}}
      \cell(3.15,-0.3)\label{\lbl{c}}
      \cell(4.15,-0.3)\label{\lbl{d}}
      \cell(5.15,-0.3)\label{\lbl{e}}
      \cell(-0.3,1)\label{\lbl{1}}
      \cell(-0.3,2)\label{\lbl{2}}
      \cell(-0.3,3)\label{\lbl{3}}
      \cell(-0.3,4)\label{\lbl{4}}
      \cell(-0.3,5)\label{\lbl{5}}
    \end{scope}
    \black(1,1)
    \black(1,2)
    \black(1,3)
    \black(2,3)
    \black(2,4)
    \white(2,1)
    \white(2,2)
    \white(3,1)
    \white(3,2)
    \white(4,1)
    \white(5,1)
    \white(3,3)
    \white(1,4)
    \white(4,2)
    \draw[wasted] \coord(3,2) -- \coord(3,1) -- \coord(2,2) -- cycle;
    \draw[wasted] \coord(4,2) -- \coord(4,1) -- \coord(3,2) -- cycle;
  \end{hexboard}
  \begin{hexboard}[scale=0.5,baseline={(0,0)}]
    \rotation{-30}
    \foreach\i in {1,...,5} {\hex(\i,1)}
    \foreach\i in {1,...,4} {\hex(\i,2)}
    \foreach\i in {1,...,3} {\hex(\i,3)}
    \foreach\i in {1,...,2} {\hex(\i,4)}
    \foreach\i in {1,...,1} {\hex(\i,5)}
    \edge[\noobtusecorner](1,1)(1,5)
    \edge[\noobtusecorner](1,1)(5,1)
    \begin{scope}
      \tikzset{every node/.style={anchor=base,yshift=-0.6ex}}
      \cell(1.15,-0.3)\label{\lbl{a}}
      \cell(2.15,-0.3)\label{\lbl{b}}
      \cell(3.15,-0.3)\label{\lbl{c}}
      \cell(4.15,-0.3)\label{\lbl{d}}
      \cell(5.15,-0.3)\label{\lbl{e}}
      \cell(-0.3,1)\label{\lbl{1}}
      \cell(-0.3,2)\label{\lbl{2}}
      \cell(-0.3,3)\label{\lbl{3}}
      \cell(-0.3,4)\label{\lbl{4}}
      \cell(-0.3,5)\label{\lbl{5}}
    \end{scope}
    \black(1,1)
    \black(1,2)
    \black(1,3)
    \black(2,3)
    \black(3,3)
    \white(2,1)
    \white(2,2)
    \white(3,1)
    \white(3,2)
    \white(4,1)
    \white(1,4)
    \white(2,4)
    \white(1,5)
    \white(5,1)
    \draw[wasted] \coord(3,2) -- \coord(3,1) -- \coord(2,2) -- cycle;
    \draw[wasted] \coord(1,5) -- \coord(1,4) -- \coord(0,5) -- cycle;
  \end{hexboard}
  \begin{hexboard}[scale=0.5,baseline={(0,0)}]
    \rotation{-30}
    \foreach\i in {1,...,5} {\hex(\i,1)}
    \foreach\i in {1,...,4} {\hex(\i,2)}
    \foreach\i in {1,...,3} {\hex(\i,3)}
    \foreach\i in {1,...,2} {\hex(\i,4)}
    \foreach\i in {1,...,1} {\hex(\i,5)}
    \edge[\noobtusecorner](1,1)(1,5)
    \edge[\noobtusecorner](1,1)(5,1)
    \begin{scope}
      \tikzset{every node/.style={anchor=base,yshift=-0.6ex}}
      \cell(1.15,-0.3)\label{\lbl{a}}
      \cell(2.15,-0.3)\label{\lbl{b}}
      \cell(3.15,-0.3)\label{\lbl{c}}
      \cell(4.15,-0.3)\label{\lbl{d}}
      \cell(5.15,-0.3)\label{\lbl{e}}
      \cell(-0.3,1)\label{\lbl{1}}
      \cell(-0.3,2)\label{\lbl{2}}
      \cell(-0.3,3)\label{\lbl{3}}
      \cell(-0.3,4)\label{\lbl{4}}
      \cell(-0.3,5)\label{\lbl{5}}
    \end{scope}
    \black(1,1)
    \black(1,2)
    \black(1,3)
    \black(2,3)
    \black(3,2)
    \white(1,4)
    \white(2,4)
    \white(1,5)
    \white(2,1)
    \white(3,1)
    \white(4,1)
    \white(5,1)
    \white(2,2)
    \white(3,3)
    \black(4,2)
    \draw[wasted] \coord(1,5) -- \coord(1,4) -- \coord(0,5) -- cycle;
  \end{hexboard}
  \]
  \[
  \quad~
  \begin{hexboard}[scale=0.5,baseline={(0,0)}]
    \rotation{-30}
    \foreach\i in {1,...,5} {\hex(\i,1)}
    \foreach\i in {1,...,4} {\hex(\i,2)}
    \foreach\i in {1,...,3} {\hex(\i,3)}
    \foreach\i in {1,...,2} {\hex(\i,4)}
    \foreach\i in {1,...,1} {\hex(\i,5)}
    \edge[\noobtusecorner](1,1)(1,5)
    \edge[\noobtusecorner](1,1)(5,1)
    \begin{scope}
      \tikzset{every node/.style={anchor=base,yshift=-0.6ex}}
      \cell(1.15,-0.3)\label{\lbl{a}}
      \cell(2.15,-0.3)\label{\lbl{b}}
      \cell(3.15,-0.3)\label{\lbl{c}}
      \cell(4.15,-0.3)\label{\lbl{d}}
      \cell(5.15,-0.3)\label{\lbl{e}}
      \cell(-0.3,1)\label{\lbl{1}}
      \cell(-0.3,2)\label{\lbl{2}}
      \cell(-0.3,3)\label{\lbl{3}}
      \cell(-0.3,4)\label{\lbl{4}}
      \cell(-0.3,5)\label{\lbl{5}}
    \end{scope}
    \black(1,1)
    \black(1,2)
    \black(2,2)
    \black(2,3)
    \black(1,4)
    \white(2,1)
    \white(3,1)
    \white(3,2)
    \white(3,3)
    \white(4,1)
    \white(5,1)
    \white(4,2)
    \white(1,3)
    \white(2,4)
    \black(1,5)
    \draw[wasted] \coord(4,2) -- \coord(4,1) -- \coord(3,2) -- cycle;
  \end{hexboard}
  \begin{hexboard}[scale=0.5,baseline={(0,0)}]
    \rotation{-30}
    \foreach\i in {1,...,5} {\hex(\i,1)}
    \foreach\i in {1,...,4} {\hex(\i,2)}
    \foreach\i in {1,...,3} {\hex(\i,3)}
    \foreach\i in {1,...,2} {\hex(\i,4)}
    \foreach\i in {1,...,1} {\hex(\i,5)}
    \edge[\noobtusecorner](1,1)(1,5)
    \edge[\noobtusecorner](1,1)(5,1)
    \begin{scope}
      \tikzset{every node/.style={anchor=base,yshift=-0.6ex}}
      \cell(1.15,-0.3)\label{\lbl{a}}
      \cell(2.15,-0.3)\label{\lbl{b}}
      \cell(3.15,-0.3)\label{\lbl{c}}
      \cell(4.15,-0.3)\label{\lbl{d}}
      \cell(5.15,-0.3)\label{\lbl{e}}
      \cell(-0.3,1)\label{\lbl{1}}
      \cell(-0.3,2)\label{\lbl{2}}
      \cell(-0.3,3)\label{\lbl{3}}
      \cell(-0.3,4)\label{\lbl{4}}
      \cell(-0.3,5)\label{\lbl{5}}
    \end{scope}
    \black(1,1)
    \black(1,2)
    \black(2,2)
    \black(2,3)
    \black(2,4)
    \white(2,1)
    \white(3,1)
    \white(3,2)
    \white(3,3)
    \white(4,1)
    \white(5,1)
    \white(4,2)
    \white(1,3)
    \white(1,4)
    \draw[wasted] \coord(4,2) -- \coord(4,1) -- \coord(3,2) -- cycle;
    \draw[wasted] \coord(1,4) -- \coord(1,3) -- \coord(0,4) -- cycle;
  \end{hexboard}
  \begin{hexboard}[scale=0.5,baseline={(0,0)}]
    \rotation{-30}
    \foreach\i in {1,...,5} {\hex(\i,1)}
    \foreach\i in {1,...,4} {\hex(\i,2)}
    \foreach\i in {1,...,3} {\hex(\i,3)}
    \foreach\i in {1,...,2} {\hex(\i,4)}
    \foreach\i in {1,...,1} {\hex(\i,5)}
    \edge[\noobtusecorner](1,1)(1,5)
    \edge[\noobtusecorner](1,1)(5,1)
    \begin{scope}
      \tikzset{every node/.style={anchor=base,yshift=-0.6ex}}
      \cell(1.15,-0.3)\label{\lbl{a}}
      \cell(2.15,-0.3)\label{\lbl{b}}
      \cell(3.15,-0.3)\label{\lbl{c}}
      \cell(4.15,-0.3)\label{\lbl{d}}
      \cell(5.15,-0.3)\label{\lbl{e}}
      \cell(-0.3,1)\label{\lbl{1}}
      \cell(-0.3,2)\label{\lbl{2}}
      \cell(-0.3,3)\label{\lbl{3}}
      \cell(-0.3,4)\label{\lbl{4}}
      \cell(-0.3,5)\label{\lbl{5}}
    \end{scope}
    \black(1,1)
    \black(1,2)
    \black(2,2)
    \black(2,3)
    \black(3,3)
    \white(1,3)
    \white(1,4)
    \white(1,5)
    \white(2,4)
    \white(2,1)
    \white(3,1)
    \white(4,1)
    \white(5,1)
    \white(3,2)
    \draw[wasted] \coord(1,4) -- \coord(1,3) -- \coord(0,4) -- cycle;
    \draw[wasted] \coord(1,5) -- \coord(1,4) -- \coord(0,5) -- cycle;
  \end{hexboard}
  \begin{hexboard}[scale=0.5,baseline={(0,0)}]
    \rotation{-30}
    \foreach\i in {1,...,5} {\hex(\i,1)}
    \foreach\i in {1,...,4} {\hex(\i,2)}
    \foreach\i in {1,...,3} {\hex(\i,3)}
    \foreach\i in {1,...,2} {\hex(\i,4)}
    \foreach\i in {1,...,1} {\hex(\i,5)}
    \edge[\noobtusecorner](1,1)(1,5)
    \edge[\noobtusecorner](1,1)(5,1)
    \begin{scope}
      \tikzset{every node/.style={anchor=base,yshift=-0.6ex}}
      \cell(1.15,-0.3)\label{\lbl{a}}
      \cell(2.15,-0.3)\label{\lbl{b}}
      \cell(3.15,-0.3)\label{\lbl{c}}
      \cell(4.15,-0.3)\label{\lbl{d}}
      \cell(5.15,-0.3)\label{\lbl{e}}
      \cell(-0.3,1)\label{\lbl{1}}
      \cell(-0.3,2)\label{\lbl{2}}
      \cell(-0.3,3)\label{\lbl{3}}
      \cell(-0.3,4)\label{\lbl{4}}
      \cell(-0.3,5)\label{\lbl{5}}
    \end{scope}
    \black(1,1)
    \black(1,2)
    \black(2,2)
    \black(3,2)
    \white(1,3)
    \white(1,4)
    \white(2,3)
    \white(2,1)
    \white(3,1)
    \white(4,1)
    \white(5,1)
    \draw[wasted] \coord(1,4) -- \coord(1,3) -- \coord(0,4) -- cycle;
  \end{hexboard}
  \]
  \[
  (d)
  \begin{hexboard}[scale=0.5,baseline={(0,0)}]
    \rotation{-30}
    \foreach\i in {1,...,5} {\hex(\i,1)}
    \foreach\i in {1,...,4} {\hex(\i,2)}
    \foreach\i in {1,...,3} {\hex(\i,3)}
    \foreach\i in {1,...,2} {\hex(\i,4)}
    \foreach\i in {1,...,1} {\hex(\i,5)}
    \edge[\noobtusecorner](1,1)(1,5)
    \edge[\noobtusecorner](1,1)(5,1)
    \begin{scope}
      \tikzset{every node/.style={anchor=base,yshift=-0.6ex}}
      \cell(1.15,-0.3)\label{\lbl{a}}
      \cell(2.15,-0.3)\label{\lbl{b}}
      \cell(3.15,-0.3)\label{\lbl{c}}
      \cell(4.15,-0.3)\label{\lbl{d}}
      \cell(5.15,-0.3)\label{\lbl{e}}
      \cell(-0.3,1)\label{\lbl{1}}
      \cell(-0.3,2)\label{\lbl{2}}
      \cell(-0.3,3)\label{\lbl{3}}
      \cell(-0.3,4)\label{\lbl{4}}
      \cell(-0.3,5)\label{\lbl{5}}
    \end{scope}
    \black(2,1)
    \black(1,2)
    \black(1,3)
    \black(1,4)
    \white(1,1)
    \white(2,2)
    \white(2,3)
    \white(3,1)
    \white(3,2)
    \white(4,1)
    \white(5,1)
    \draw[wasted] \coord(3,2) -- \coord(3,1) -- \coord(2,2) -- cycle;
  \end{hexboard}
  \begin{hexboard}[scale=0.5,baseline={(0,0)}]
    \rotation{-30}
    \foreach\i in {1,...,5} {\hex(\i,1)}
    \foreach\i in {1,...,4} {\hex(\i,2)}
    \foreach\i in {1,...,3} {\hex(\i,3)}
    \foreach\i in {1,...,2} {\hex(\i,4)}
    \foreach\i in {1,...,1} {\hex(\i,5)}
    \edge[\noobtusecorner](1,1)(1,5)
    \edge[\noobtusecorner](1,1)(5,1)
    \begin{scope}
      \tikzset{every node/.style={anchor=base,yshift=-0.6ex}}
      \cell(1.15,-0.3)\label{\lbl{a}}
      \cell(2.15,-0.3)\label{\lbl{b}}
      \cell(3.15,-0.3)\label{\lbl{c}}
      \cell(4.15,-0.3)\label{\lbl{d}}
      \cell(5.15,-0.3)\label{\lbl{e}}
      \cell(-0.3,1)\label{\lbl{1}}
      \cell(-0.3,2)\label{\lbl{2}}
      \cell(-0.3,3)\label{\lbl{3}}
      \cell(-0.3,4)\label{\lbl{4}}
      \cell(-0.3,5)\label{\lbl{5}}
    \end{scope}
    \black(2,1)
    \black(1,2)
    \black(1,3)
    \black(2,3)
    \black(2,4)
    \white(1,1)
    \white(2,2)
    \white(3,1)
    \white(3,2)
    \white(4,1)
    \white(5,1)
    \white(3,3)
    \white(1,4)
    \white(4,2)
    \draw[wasted] \coord(3,2) -- \coord(3,1) -- \coord(2,2) -- cycle;
    \draw[wasted] \coord(4,2) -- \coord(4,1) -- \coord(3,2) -- cycle;
  \end{hexboard}
  \begin{hexboard}[scale=0.5,baseline={(0,0)}]
    \rotation{-30}
    \foreach\i in {1,...,5} {\hex(\i,1)}
    \foreach\i in {1,...,4} {\hex(\i,2)}
    \foreach\i in {1,...,3} {\hex(\i,3)}
    \foreach\i in {1,...,2} {\hex(\i,4)}
    \foreach\i in {1,...,1} {\hex(\i,5)}
    \edge[\noobtusecorner](1,1)(1,5)
    \edge[\noobtusecorner](1,1)(5,1)
    \begin{scope}
      \tikzset{every node/.style={anchor=base,yshift=-0.6ex}}
      \cell(1.15,-0.3)\label{\lbl{a}}
      \cell(2.15,-0.3)\label{\lbl{b}}
      \cell(3.15,-0.3)\label{\lbl{c}}
      \cell(4.15,-0.3)\label{\lbl{d}}
      \cell(5.15,-0.3)\label{\lbl{e}}
      \cell(-0.3,1)\label{\lbl{1}}
      \cell(-0.3,2)\label{\lbl{2}}
      \cell(-0.3,3)\label{\lbl{3}}
      \cell(-0.3,4)\label{\lbl{4}}
      \cell(-0.3,5)\label{\lbl{5}}
    \end{scope}
    \black(2,1)
    \black(1,2)
    \black(1,3)
    \black(2,3)
    \black(3,3)
    \white(1,1)
    \white(2,2)
    \white(3,1)
    \white(3,2)
    \white(4,1)
    \white(1,4)
    \white(2,4)
    \white(1,5)
    \white(5,1)
    \draw[wasted] \coord(3,2) -- \coord(3,1) -- \coord(2,2) -- cycle;
    \draw[wasted] \coord(1,5) -- \coord(1,4) -- \coord(0,5) -- cycle;
  \end{hexboard}
  \begin{hexboard}[scale=0.5,baseline={(0,0)}]
    \rotation{-30}
    \foreach\i in {1,...,5} {\hex(\i,1)}
    \foreach\i in {1,...,4} {\hex(\i,2)}
    \foreach\i in {1,...,3} {\hex(\i,3)}
    \foreach\i in {1,...,2} {\hex(\i,4)}
    \foreach\i in {1,...,1} {\hex(\i,5)}
    \edge[\noobtusecorner](1,1)(1,5)
    \edge[\noobtusecorner](1,1)(5,1)
    \begin{scope}
      \tikzset{every node/.style={anchor=base,yshift=-0.6ex}}
      \cell(1.15,-0.3)\label{\lbl{a}}
      \cell(2.15,-0.3)\label{\lbl{b}}
      \cell(3.15,-0.3)\label{\lbl{c}}
      \cell(4.15,-0.3)\label{\lbl{d}}
      \cell(5.15,-0.3)\label{\lbl{e}}
      \cell(-0.3,1)\label{\lbl{1}}
      \cell(-0.3,2)\label{\lbl{2}}
      \cell(-0.3,3)\label{\lbl{3}}
      \cell(-0.3,4)\label{\lbl{4}}
      \cell(-0.3,5)\label{\lbl{5}}
    \end{scope}
    \black(2,1)
    \black(1,2)
    \black(1,3)
    \black(2,3)
    \black(3,2)
    \white(1,4)
    \white(2,4)
    \white(1,5)
    \white(1,1)
    \white(3,1)
    \white(4,1)
    \white(5,1)
    \white(2,2)
    \white(3,3)
    \black(4,2)
    \draw[wasted] \coord(1,5) -- \coord(1,4) -- \coord(0,5) -- cycle;
  \end{hexboard}
  \]
  \caption{(a) The 15-cell corner region. (b) An upward-pointing
    wasted triangle when \lbl{a1} and \lbl{b1} are white. }
  \label{fig:wasted}
\end{figure}

\begin{lemma}\label{lem:upward}
  Consider a winning path on a board of size at least $5\times
  5$. Then there exists an upward-pointing wasted triangle within the
  15-cell corner region shown in Figure~\ref{fig:wasted}(a).
\end{lemma}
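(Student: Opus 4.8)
The plan is to argue combinatorially about the black stones alone. First I would record the reformulation already implicit in the text: a unit triangle is wasted exactly when its three corner cells are all non-path cells (the added edge columns counting as empty), and an \emph{upward-pointing} triangle is one of the form $\{(x,y),(x+1,y),(x+1,y-1)\}$, whose apex $(x+1,y-1)$ points toward the top edge. So the goal becomes: exhibit three pairwise-adjacent empty cells in this orientation inside the $15$-cell region of Figure~\ref{fig:wasted}(a). Here the top edge (row $y=1$) is Black's and the adjacent left edge (column $x=1$) is White's, and the hypothesis $n\ge 5$ guarantees the corner region sits away from the bottom edge.

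Next I would extract the structural constraints a winning path imposes near the corner. Since the path is a \emph{minimal} top-to-bottom connection, its stones form an induced (chordless) simple path $s_1,\dots,s_k$ with $s_1$ on the top edge and $s_k$ on the bottom edge. The crucial small fact I would prove first is that exactly one stone lies in the top row $y=1$: if some $s_j$ with $j>1$ were also in row $1$ it would touch the top edge, and then $s_j,\dots,s_k$ would already be a winning connection, contradicting minimality. Call the unique top-row stone $(c,1)$.

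With this in hand I would localize the analysis: the upward triangle I seek will always be found within a bounded neighbourhood of the corner, so only the behaviour of the path inside the $15$-cell region matters. I would then run a case analysis driven by $c$ and by the first few steps of the descending path $s_1=(c,1),s_2,s_3,\dots$, which are confined to a handful of cells near the corner. When $c\ge 3$, or more generally when the path does not penetrate the left part of the region, the cells $(1,1),(2,1),(1,2),(2,2)$ together with the edge column $x=0$ are empty, and a wasted upward triangle such as $\{(1,2),(2,2),(2,1)\}$ or $\{(0,2),(1,2),(1,1)\}$ is immediate. The substantive cases are $c=1$ and $c=2$, where the path runs down along column $1$ or dips into it; these are precisely the configurations drawn in Figure~\ref{fig:wasted}(c) and~(d), and in each I would point to the indicated triangle and verify that none of its three cells lies on the path.

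The main obstacle is making this case analysis genuinely exhaustive while keeping it finite. A chordless path could a priori wander through the corner in many ways, so I need a clean argument that only finitely many local patterns near $(1,1)$ can occur and that every one leaves an upward triangle uncovered. I expect the most economical organization is to follow the \emph{left side} of the path downward from $(c,1)$---the boundary between the path and the region attached to the White left edge---and to show that the forced ``staircase'' shape of this boundary near the corner always encloses at least one empty triangle. Pinning down that the enclosed triangle is \emph{upward}-pointing rather than downward-pointing, which the corner geometry forces but which needs checking in the borderline configurations of Figure~\ref{fig:wasted}(c)--(d), is the delicate point.
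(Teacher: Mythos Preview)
Your plan is essentially the paper's own proof: a finite local case analysis near the acute corner, driven by chordlessness of the path and the observation that only $s_1$ lies in row~$1$. Your parametrization by the column $c$ of that top stone is equivalent to the paper's split on whether \lbl{a1} and \lbl{a2} lie on the path---your $c=1$ is the paper's Case~2, your $c=2$ with $s_2=\lbl{a2}$ is Case~3, and everything else collapses to Case~1.

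One point to tighten: for $c\ge 3$ you assert that $(1,1),(2,1),(1,2),(2,2)$ are all empty, but $(2,2)=\lbl{b2}$ need not be (the path may run through it), and showing $(1,2)=\lbl{a2}$ empty is not automatic either---it needs exactly the chordlessness argument you allude to later, since with \lbl{a1} and \lbl{b1} off the path the only candidate path-neighbours of \lbl{a2} are \lbl{b2} and \lbl{a3}, which are themselves adjacent. Once that is supplied, the triangle $\{(0,2),(1,2),(1,1)\}$ disposes of every $c\ge 3$ (and also of $c=2$ with $s_2=\lbl{b2}$), and the remaining configurations are precisely those enumerated in Figure~\ref{fig:wasted}(c)--(d).
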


\begin{proof}
  By case distinction. Consider the board region in
  Figure~\ref{fig:wasted}(a). We use standard Hex coordinates; for
  example, the cell in the acute corner is \lbl{a1}, and its neighbors
  are \lbl{b1} and \lbl{a2}. For brevity, we refer to cells that are
  part of the winning path as ``black'' and all other cells as
  ``white''. Recall that a wasted triangle can overlap the left
  edge (cf.\@ Figure~\ref{fig:tessel} (b) and (d)).

  \begin{itemize}
  \item Case 1: \lbl{a2} is white. Then \lbl{a1} must also be white,
    since no minimal winning path can pass through it. Then \lbl{a1},
    \lbl{a2}, and the left edge form an upward-pointing wasted
    triangle, as shown in Figure~\ref{fig:wasted}(b).
  \item Case 2: \lbl{a2} and \lbl{a1} are black. We distinguish
    several subcases, based on how the path continues. These subcases
    are shown in Figure~\ref{fig:wasted}(c). In each case, we know a
    cell to be white if it is next to the top edge, if it already has two
    adjacent black neighbors, if its unique black neighbor already has
    two black neighbors, or if there is no room for it to have two
    non-adjacent black neighbors. In all cases, this is sufficient to
    identify at least one upward-pointing wasted triangle (and
    sometimes more than one).
  \item Case 3: \lbl{a2} is black and \lbl{a1} is white. In this case,
    \lbl{b1} must be black, and the situation is similar to the
    previous case, but slightly simpler. It is shown in
    Figure~\ref{fig:wasted}(d).\qedhere
  \end{itemize}
\end{proof}

\subsection{Boundary components in triangular regions}

To motivate the next construction, we return to the unit grid of
Figure~\ref{fig:tessel}(d), with its winning path and the domains of
the path stones. Consider the following triangle-shaped subregion:
\[
\begin{hexboard}[scale=0.5,baseline={(0,0)}]
  \rotation{-30}
  \begin{pgfonlayer}{hexes}
    \foreach\i in {1,...,9} {
      \draw[gridgray] \coord(0,\i) -- \coord(10-\i,\i);
    }
    \foreach\j in {0,...,8} {
      \draw[gridgray] \coord(\j,1) -- \coord(\j,10-\j);
    }
    \foreach\j in {1,...,9} {
      \draw[gridgray] \coord(\j,1) -- \coord(0,\j+1);
    }
  \end{pgfonlayer}
  \region(3,7)(0,1)(0,2)(2,0)(2,0)
  \region(3,6)(0,1)(0,2)(2,1)(2,0)
  \region(3,5)(0,1)(0,2)(2,1)(1,0)
  \region(4,4)(0,1)(1,2)(2,0)(1,0)
  \region(5,4)(1,0)(0,2)(1,2)(2,0)
  \region(6,4)(1,0)(0,2)(2,0)(2,0)
  \region(8,2)(1,0)(0,2)(2,0)(2,0)
  \region(7,2)(1,0)(0,2)(1,2)(2,0)
  \region(6,2)(1,0)(0,2)(1,2)(2,0)
  \region(5,2)(1,0)(0,2)(1,2)(2,0)
  \region(4,2)(1,0)(0,2)(1,2)(2,0)
  \region(3,2)(1,0)(0,2)(1,2)(2,0)
  \region(2,2)(0,1)(1,2)(2,0)(1,0)
  \region(1,3)(0,1)(0,2)(2,1)(1,0)
  \region(1,4)(0,1)(0,2)(2,1)(2,0)
  \region(1,5)(0,1)(0,2)(2,1)(2,0)
  \region(1,6)(0,1)(0,2)(2,1)(2,0)
  \region(1,7)(0,1)(0,2)(2,1)(2,0)
  \region(1,8)(0,1)(0,2)(2,1)(2,0)
  \region(1,9)(0,1)(0,2)(2,0)(2,0)
\end{hexboard}
\]
This particular region of the unit grid consists of 45
downward-pointing unit triangles (wasted or otherwise) and 36
upward-pointing ones. Therefore, the total number of downward-pointing
unit triangles exceeds the number of upward-pointing ones by 9 in this
region. Note that the domain of each interior black stone partially or
fully overlaps some downward-pointing and some upward-pointing unit
triangles. By area, each such domain covers an equal amount of
downward- and upward-pointing unit triangles (2 units of each).  On
the other hand, the domain of each black boundary stone covers 1.5
downward-pointing and 0.5 upward-pointing unit triangles by area. It
follows that the total excess of 9 upward-pointing triangles in the
region can be accounted for as follows: 4 of them are due to boundary
stones, and 5 are due to an excess of downward-pointing wasted
triangles over upward-pointing ones. In other words, in the presence
of this region's 4 boundary stones, there must be 5 more
downward-pointing wasted triangles than upward-pointing ones.

This reasoning generalizes to other triangle-shaped regions. Consider
a winning path on some Hex board, and consider some downward-pointing
triangle-shaped region of the unit grid. We first examine what happens
along the region's boundary. Two stones of the winning path are
\emph{consecutive} if they are adjacent. We call a maximal stretch of
consecutive stones that lie on the region's boundary a \emph{boundary
component}. Figure~\ref{fig:boundary} shows some possible shapes of
boundary components for the triangular region. In fact,
Figure~\ref{fig:boundary} represents all possible shapes of boundary
components, up to symmetry, the direction of neighboring path segments
outside the region, and the number of repetitions of horizontal path
segments. We say that a boundary component is \emph{transversal} if it
is of the form shown in Figure~\ref{fig:boundary}$(b)$ or $(c)$.

\begin{figure}
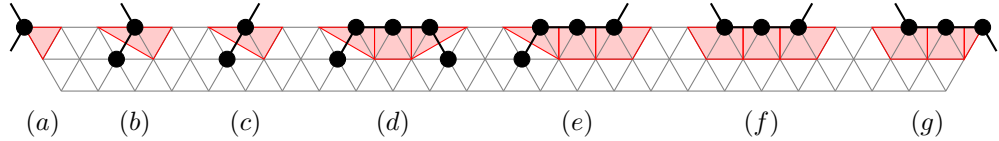

\[
\begin{hexboard}[scale=0.65,baseline={(0,0)}]
  \def\smallradius{0.15}
  \begin{scope}[xshift=0cm]
    \rotation{-30}
    \grid{0}{3}
    \region(-1,1)(1,1)(1,2)(2,1)(2,1)
    \draw[ultra thick] \coord(-1.75,1.75) -- \coord(-1,1) -- \coord(-1,0.25);
    \path
    \coord(-1,4) node {$(a)$}
    ;
  \end{scope}
  \begin{scope}[xshift=2.8cm]
    \rotation{-30}
    \grid{0}{4}
    \smallblack(0,2)
    \region(1,1)(0,1)(1,2)(2,1)(2,1)
    \draw[ultra thick] \coord(0,2) -- \coord(1,1) -- \coord(1,0.25);
    \path
    \coord(-0.5,4) node {$(b)$}
    ;
  \end{scope}
  \begin{scope}[xshift=6.3cm]
    \rotation{-30}
    \grid{0}{4}
    \smallblack(0,2)
    \region(1,1)(0,1)(1,2)(2,1)(2,1)
    \draw[ultra thick] \coord(0,2) -- \coord(1,1) -- \coord(1.75,0.25);
    \path
    \coord(-0.5,4) node {$(c)$}
    ;
  \end{scope}
  \begin{scope}[xshift=9.8cm]
    \rotation{-30}
    \grid{0}{6}
    \smallblack(0,2)
    \region(1,1)(0,1)(1,2)(1.5,1)(1.5,1)
    \region(2,1)(0.5,1)(0,2)(1,2)(1.5,1)
    \region(3,1)(0.5,1)(0,2)(2,1)(2,1)
    \smallblack(3,2)
    \draw[ultra thick] \coord(0,2) -- \coord(1,1) -- \coord(2,1) --
    \coord(3,1) -- \coord(3,2);
    \path
    \coord(0.5,4) node {$(d)$}
    ;
  \end{scope}
  \begin{scope}[xshift=14.7cm]
    \rotation{-30}
    \grid{0}{6}
    \smallblack(0,2)
    \region(1,1)(0,1)(1,2)(1.5,1)(1.5,1)
    \region(2,1)(0.5,1)(0,2)(1,2)(1.5,1)
    \region(3,1)(0.5,1)(0,2)(1,2)(2,1)
    \draw[ultra thick] \coord(0,2) -- \coord(1,1) -- \coord(2,1) --
    \coord(3,1) -- \coord(3.75,0.25);
    \path
    \coord(0.5,4) node {$(e)$}
    ;
  \end{scope}
  \begin{scope}[xshift=1.4cm]
    \begin{scope}[xshift=0cm,yshift=-3cm]
      \rotation{-30}
      \grid{0}{6}
      \region(1,1)(0,1)(0,2)(1,2)(1.5,1)
      \region(2,1)(0.5,1)(0,2)(1,2)(1.5,1)
      \region(3,1)(0.5,1)(0,2)(1,2)(2,1)
      \draw[ultra thick] \coord(1,0.25) -- \coord(1,1) -- \coord(2,1) -- \coord(3,1) -- \coord(3.75,0.25);
      \path
      \coord(0.5,4) node {$(f)$}
      ;
    \end{scope}
    \begin{scope}[xshift=4.9cm,yshift=-3cm]
      \rotation{-30}
      \grid{0}{4}
      \region(1,1)(0,1)(0,2)(1,2)(1.5,1)
      \region(2,1)(0.5,1)(0,2)(1,2)(1.5,1)
      \region(3,1)(0.5,1)(0,2)(0,2)(1,1)
      \draw[ultra thick] \coord(1,0.25) -- \coord(1,1) -- \coord(2,1) -- \coord(3,1) -- \coord(3,1.75);
      \path
      \coord(-0.5,4) node {$(g)$}
      ;
    \end{scope}
    \begin{scope}[xshift=8.75cm,yshift=-3cm]
      \rotation{-30}
      \grid{0}{4}
      \region(1,1)(0,1)(1,2)(1.5,1)(1.5,1)
      \region(2,1)(0.5,1)(0,2)(1,2)(1.5,1)
      \region(3,1)(0.5,1)(0,2)(0,2)(1,1)
      \draw[ultra thick] \coord(0,2) -- \coord(1,1) -- \coord(2,1) -- \coord(3,1) -- \coord(3,1.75);
      \smallblack(0,2)
      \path
      \coord(-0.5,4) node {$(h)$}
      ;
    \end{scope}
    \begin{scope}[xshift=12.95cm,yshift=-3cm]
      \rotation{-30}
      \grid{0}{3}
      \region(-1,1)(1,1)(1,2)(1.5,1)(1.5,1)
      \region(0,1)(0.5,1)(0,2)(1,2)(1.5,1)
      \region(1,1)(0.5,1)(0,2)(1,2)(1.5,1)
      \region(2,1)(0.5,1)(0,2)(0,2)(1,1)
      \draw[ultra thick] \coord(-1.75,1.75) -- \coord(-1,1) -- \coord(0,1) -- \coord(1,1) -- \coord(2,1) -- \coord(2,1.75);
      \path
      \coord(-1,4) node {$(i)$}
      ;
    \end{scope}
  \end{scope}
\end{hexboard}
\]
\caption{Possible boundary components of winning paths within a
  triangular region of the unit grid.}
\label{fig:boundary}
\end{figure}

As can be seen by inspecting Figure~\ref{fig:boundary}, the union of
the domains of the stones in any one boundary component covers, by
area, one more downward-pointing than upward-pointing unit triangle.
From this we can get a relationship between several quantities.  For
the given triangular region of the unit grid, let $e$ denote the
excess number of downward-pointing over upward-pointing unit triangles
in the region. Let $b$ denote the number of boundary components of the
winning path in question.  Let $\tdown$ and $\tup$ denote the number
of downward- and upward-pointing wasted triangles in the region,
respectively. Since each boundary component reduces the excess by
exactly one, we have
\begin{equation}\label{eqn:ebt}
  e = b + \tdown - \tup.
\end{equation}

\subsection{Improved bounds on the path length}

Using equations~\eqref{eqn:k} and {\eqref{eqn:ebt}} and
Lemma~\ref{lem:upward}, we can derive improved bounds on the length of
winning paths.

\begin{lemma}\label{lem:bound1}
  Assume $n\geq 6$, and let $k$ be the length of a winning
  path on a Hex board of size $n\times n$. Then $k\leq
  \frac{n^2}{2}-\frac{n}{4} + \frac{1}{4}$.
\end{lemma}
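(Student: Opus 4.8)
The plan is to convert the desired inequality into a lower bound on the number of wasted triangles and then to produce that bound from the regional identity~\eqref{eqn:ebt} together with Lemma~\ref{lem:upward}. Substituting equation~\eqref{eqn:k} into the target inequality $\frac{n^2+1}{2}-\frac{t}{4}\leq \frac{n^2}{2}-\frac{n}{4}+\frac14$ and solving for $t$, one finds that the claim is \emph{equivalent} to $t\geq n+1$, where $t$ is the total number of wasted triangles of the path. So it suffices to show that every winning path on an $n\times n$ board with $n\geq 5$ leaves at least $n+1$ wasted triangles.

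To obtain such a bound I would apply~\eqref{eqn:ebt} to the downward-pointing triangular region of side $n-1$ sitting in the corner, exactly as in the worked example preceding Figure~\ref{fig:boundary} (there $n=10$, the region had $45$ downward and $36$ upward unit triangles, and the excess was $e=9=n-1$). For a downward-pointing triangle of side $m$ the excess of downward over upward unit triangles is $m$, so here $e=n-1$, and~\eqref{eqn:ebt} gives $\tdown-\tup=(n-1)-b$, where $b$ is the number of boundary components of the path in the region. Since the wasted triangles inside the region are among all the wasted triangles, $t\geq \tdown+\tup=(n-1)-b+2\tup$. Hence it is enough to prove $\tup\geq b+1$: the number of upward-pointing wasted triangles in the region must exceed the number of boundary components, for then $t\geq (n-1)-b+2(b+1)=n+1+b\geq n+1$, as required.

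The heart of the argument is this charging step, and I expect it to be the main obstacle. Using the complete classification of boundary-component shapes in Figure~\ref{fig:boundary}, the idea is that at the place where a boundary component leaves the region's boundary the adjacent cell is forced to be white, producing an upward-pointing wasted triangle, so that distinct boundary components can be assigned distinct upward-pointing wasted triangles; Lemma~\ref{lem:upward} then supplies one further upward-pointing wasted triangle near the acute corner that is not charged to any component, upgrading the count to $\tup\geq b+1$. The delicate case is the \emph{transversal} boundary components of Figure~\ref{fig:boundary}$(b)$ and $(c)$, which merely touch a corner of the region and need not force a wasted triangle on their own; these must be treated separately, and the bookkeeping has to be carried out carefully along each of the three sides of the triangular region, including the subtle behaviour along the left board edge where a wasted triangle may overlap the edge. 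Verifying, case by case against Figure~\ref{fig:boundary}, that this assignment is injective and that the corner triangle of Lemma~\ref{lem:upward} is genuinely additional is the crux on which the bound rests.
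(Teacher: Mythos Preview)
Your reduction to the inequality $t\geq n+1$ is correct and is exactly what the paper does. The gap is in how you intend to obtain that bound. You try to extract all of $t\geq n+1$ from a \emph{single} triangular region by proving $\tup\geq b+1$; but this is precisely the step you yourself flag as ``the crux,'' and you do not actually carry it out. Worse, your own analysis shows why it is problematic: transversal boundary components (Figure~\ref{fig:boundary}(b),(c)) do not force any upward wasted triangle, yet along the hypotenuse of the region one can have up to $\frac{n-1}{2}$ such components. So the injective charging you sketch simply does not exist in that case, and ``treating these separately'' is not a plan but a restatement of the difficulty. With $b$ as large as $\frac{n+1}{2}$ you would need $\tup\geq\frac{n+3}{2}$, whereas Lemma~\ref{lem:upward} gives only $\tup\geq 1$; nothing in your outline bridges that gap.

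The paper sidesteps this entirely with two simple moves. First, it bounds $b$ directly: the left side of the region is the white edge column and carries no stones, the top side carries at most one boundary component (the path meets row~$1$ exactly once), and the hypotenuse of length $n-1$ carries at most $\frac{n-1}{2}$ components by spacing; hence $b\leq\frac{n+1}{2}$. Plugging this and the modest $\tup\geq 1$ from Lemma~\ref{lem:upward} into~\eqref{eqn:ebt} yields $\tdown+\tup\geq\frac{n+1}{2}$ wasted triangles in the region. Second, and this is what you are missing, the paper uses \emph{two} disjoint triangular regions $A$ and $B$, one at each acute corner of the unit grid (they lie in the half-planes $x+y\leq n$ and $x+y\geq n+2$ respectively). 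The same count applies to $B$ by symmetry, and since the regions are disjoint the contributions add to give $t\geq n+1$. So the missing idea is not a more refined charging inside one region, but the direct geometric bound on $b$ together with the use of the second corner region.
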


\begin{figure}
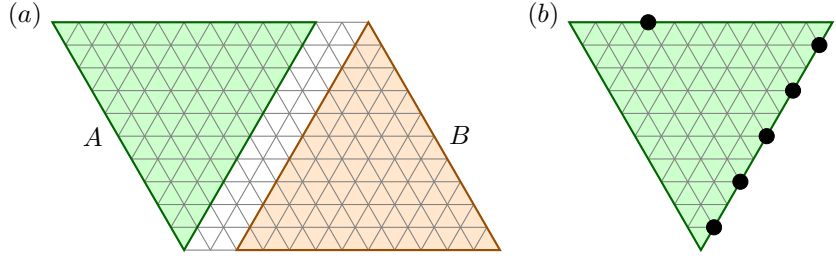

  \[
  (a)~
  \begin{hexboard}[scale=0.5,baseline={(0,0)}]
    \rotation{-30}
    \begin{pgfonlayer}{hexes}
      \fill[trianglelightgreen]
        \coord(0,1) -- \coord(0,11) -- \coord(10,1) -- cycle;
      \fill[trianglelightorange]
        \coord(12,11) -- \coord(12,1) -- \coord(2,11) -- cycle;
    \end{pgfonlayer}
    \unitgrid{11}{11}
    \draw[trianglegreen]
      \coord(0,1) -- \coord(0,11) -- \coord(10,1) -- cycle;
    \draw[triangleorange]
      \coord(12,11) -- \coord(12,1) -- \coord(2,11) -- cycle;
    \path \coord(-1,6.05) node {$A$};
    \path \coord(13,5.95) node {$B$};
  \end{hexboard}
  \quad
  (b)~
  \begin{hexboard}[scale=0.5,baseline={(0,0)}]
    \rotation{-30}
    \begin{pgfonlayer}{hexes}
      \fill[trianglelightgreen]
        \coord(0,1) -- \coord(0,11) -- \coord(10,1) -- cycle;
    \end{pgfonlayer}
    \unittriangle{11}
    \begin{pgfonlayer}{stones}
    \draw[trianglegreen]
      \coord(0,1) -- \coord(0,11) -- \coord(10,1) -- cycle;
    \end{pgfonlayer}
    \smallblack(1,10)
    \smallblack(3,8)  
    \smallblack(5,6)
    \smallblack(7,4)
    \smallblack(9,2)
    \smallblack(3,1)
  \end{hexboard}
  \]
  \caption{(a) The regions $A$ and $B$. (b) A winning path has at most
    $\frac{n+1}{2}$ boundary components on the boundary of region A.}
  \label{fig:ab}
\end{figure}

\begin{proof}
  Consider a winning path on a board of size $n\times n$. On the
  board's unit grid, let $A$ and $B$ be the maximal triangular regions
  containing the left and right acute corner, respectively, as shown
  in Figure~\ref{fig:ab}(a). We first consider region $A$. Within this
  region, the excess number of downward-pointing over upward-pointing
  unit triangles is $e = n-1$. As before, let $b$ be the number of
  boundary components of the winning path. There can be at most one
  boundary component along the region's top edge, and at most
  $\frac{n-1}{2}$ boundary components along its right edge, as shown
  in Figure~\ref{fig:ab}(b). Therefore, $b\leq 1 + \frac{n-1}{2} =
  \frac{n+1}{2}$.  As before, let $\tdown$ and $\tup$ be the number of
  downward- and upward-pointing wasted triangles in region $A$. Then
  using {\eqref{eqn:ebt}}, we have $\tdown - \tup = e - b \geq (n-1) -
  \frac{n+1}{2} = \frac{n}{2} - \frac{3}{2}$, or equivalently, $\tdown
  + \tup \geq \frac{n}{2} - \frac{3}{2} + 2\,\tup$.  Moreover, from
  Lemma~\ref{lem:upward} and our assumption that $n\geq 6$, we know that
  $\tup\geq 1$, which implies $\tdown+\tup \geq
  \frac{n}{2}+\frac{1}{2}$.  In other words, there are at least
  $\frac{n}{2}+\frac{1}{2}$ wasted triangles in region $A$. By the
  symmetric argument, there are also at least
  $\frac{n}{2}+\frac{1}{2}$ wasted triangles in region $B$, and
  therefore, the total number $t$ of wasted triangles in the entire
  unit grid satisfies $t \geq n+1$. Let $k$ be the length of the
  winning path; then by {\eqref{eqn:k}}, we have $k = \frac{n^2+1}{2}
  - \frac{t}{4} \leq \frac{n^2+1}{2} - \frac{n+1}{4} =
  \frac{n^2}{2}-\frac{n}{4}+\frac{1}{4}$, as claimed.
\end{proof}

The next lemma offers a small improvement over Lemma~\ref{lem:bound1}
in case $n\equiv 3\pmod{8}$.

\begin{lemma}\label{lem:bound2}
  Assume $n\geq 6$ and $n\equiv 3\pmod{8}$, and let $k$ be the length
  of a winning path on a Hex board of size $n\times n$. Then
  $k\leq \frac{n^2}{2}-\frac{n}{4} - \frac{3}{4}$.
\end{lemma}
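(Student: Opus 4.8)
The plan is to reuse the wasted-triangle count from Lemma~\ref{lem:bound1} and extract one extra stone using a divisibility constraint special to this residue class. First I would record a fact valid for every odd $n$: by~\eqref{eqn:k} we have $\frac{t}{4}=\frac{n^2+1}{2}-k$, and since $n$ is odd both $\frac{n^2+1}{2}$ and $k$ are integers, so $4\mid t$. Because $n\equiv 3\pmod 8$ gives $n+1\equiv 4\pmod 8$, in particular $4\mid(n+1)$, the lower bound $t\ge n+1$ from Lemma~\ref{lem:bound1} together with $4\mid t$ leaves only two possibilities: either $t=n+1$, or $t\ge n+5$ (the next multiple of $4$). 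Hence it suffices to exclude the extremal value $t=n+1$; once that is done, $t\ge n+5$ and~\eqref{eqn:k} give $k\le \frac{n^2+1}{2}-\frac{n+5}{4}=\frac{n^2}{2}-\frac{n}{4}-\frac{3}{4}$, as claimed.

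So the entire proof reduces to showing that $t=n+1$ cannot occur. I would first unwind what this equality forces. In Lemma~\ref{lem:bound1} the estimate $t\ge n+1$ arose as $t\ge t_A+t_B\ge\frac{n+1}{2}+\frac{n+1}{2}$, discarding any wasted triangles in the band between $A$ and $B$. Thus $t=n+1$ requires simultaneously that this middle band contains no wasted triangle and that each of $A$, $B$ attains exactly $\frac{n+1}{2}$ wasted triangles. Tracing the inequalities of that proof, the latter forces, in each region, the maximal number $b=\frac{n+1}{2}$ of boundary components \emph{and} exactly one upward-pointing wasted triangle ($\tup=1$), so that Lemma~\ref{lem:upward} is met with no slack and the path meets the diagonal edge of $A$ (and of $B$) in the maximally spread, alternating pattern of Figure~\ref{fig:ab}(b).

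The main obstacle is to derive a contradiction from this rigid picture, and this is the step where the full strength of $n\equiv 3\pmod 8$ is consumed: the divisibility reduction above only uses $n\equiv 3\pmod 4$, which also holds for $n\equiv 7\pmod 8$, so the residue must be distinguished by a finer structural argument. I would analyze the forced zig-zag of the path along the two diagonals and across the clean middle band, propagating the alternating pattern from one corner region toward the other. Since the diagonal of $A$ carries $\frac{n-1}{2}$ isolated boundary components while the band stays free of wasted triangles, the positions of the path stones on successive grid lines are pinned down up to a global parity; carrying this parity across the band and matching it against the pattern forced in $B$ should produce a mismatch---either an unavoidable extra wasted triangle, contradicting $t=n+1$, or a severed connection, contradicting that the path is winning---exactly in the case $n\equiv 3\pmod 8$. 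I expect the bulk of the effort, and the only delicate part, to be this propagation-and-matching step; the divisibility reduction is routine.
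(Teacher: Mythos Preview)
Your reduction via global divisibility is correct and clean: $4\mid t$ together with $t\geq n+1$ and $4\mid(n+1)$ does leave only $t=n+1$ to exclude, and you correctly unwind that $t=n+1$ forces $t_A=t_B=\frac{n+1}{2}$, hence in each region $b=\frac{n+1}{2}$ and $\tup=1$. But the proof stops there. The proposed ``propagation-and-matching'' step is not carried out; you describe it only in outline (``should produce a mismatch'', ``I expect the bulk of the effort\ldots''), and it is not clear that tracking a parity across the middle strip actually yields a contradiction, nor how that argument would distinguish $n\equiv 3$ from $n\equiv 7\pmod 8$. As written, this is a plan with its hardest step missing.

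The paper closes the gap with a much simpler, purely local observation that replaces your global propagation entirely. In the extremal case $b=\frac{n+1}{2}$, every boundary component of $A$ is a single transversal stone, so the domains inside $A$ have total area $4\cdot(\text{interior})+2\cdot\frac{n+1}{2}$; since $n\equiv 3\pmod 4$ this is divisible by $4$, and so is the area $(n-1)^2$ of $A$. Hence the wasted-triangle count $t_A$ itself is divisible by $4$. Now the residue class bites: for $n\equiv 3\pmod 8$ one has $\frac{n+1}{2}\equiv 2\pmod 4$, so $t_A=\frac{n+1}{2}$ is impossible and in fact $t_A\geq\frac{n+5}{2}$. (For $n\equiv 7\pmod 8$, $\frac{n+1}{2}\equiv 0\pmod 4$ and no improvement results, which is why that case is excluded.) Combined with the Case~1 bound $t_A\geq\frac{n+3}{2}$ when $b<\frac{n+1}{2}$, this gives $t_A\geq\frac{n+3}{2}$ unconditionally, hence $t\geq n+3$, and then integrality of $k$ finishes as you noted. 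So the missing idea is not a delicate parity chase across the board but a second application of divisibility-by-$4$, this time to $t_A$ rather than to $t$.
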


\begin{proof}
  The proof is essentially the same as that of Lemma~\ref{lem:bound1},
  with one small twist. As before, we consider regions $A$ and $B$ as
  in Figure~\ref{fig:ab}(a). As before, we have $e=n-1$ and
  $b\leq\frac{n+1}{2}$. We claim that there are at least
  $\frac{n}{2}+\frac{3}{2}$ wasted triangles in region $A$. To prove
  this claim, we consider two cases.

  \begin{itemize}
    \item Case 1: $b < \frac{n+1}{2}$. Then $\tdown-\tup = e - b >
      \frac{n}{2} - \frac{3}{2}$, and with $\tup\geq 1$, this implies
      $\tdown+\tup > \frac{n}{2}+\frac{1}{2}$. Since
      $\frac{n}{2}+\frac{1}{2}$ is an integer, $\tdown+\tup$ must be
      at least the next largest integer, so $\tdown+\tup \geq
      \frac{n}{2}+\frac{3}{2}$, as claimed.
    \item Case 2: $b = \frac{n+1}{2}$. In this case, there must be
      exactly one boundary component along the top edge of region $A$,
      and exactly $\frac{n-1}{2}$ of them along the right edge, as
      shown in Figure~\ref{fig:ab}(b). Since there is no additional
      space, each boundary component must consist of a single stone,
      and therefore must be transversal.  The area of region
      $A$ is $(n-1)^2$ units, which is divisible by $4$. The area
      covered by the domains of stones is also divisible by $4$,
      because each interior stone has a domain of area $4$, 
      each boundary stone has a domain of area $2$, and there are
      an even number of boundary stones. Therefore, the number of
      wasted triangles within region $A$ must also be divisible by
      $4$. From the proof of Lemma~\ref{lem:bound1}, we know that
      there are at least $\frac{n}{2}+\frac{1}{2}$ wasted triangles in
      region $A$. Since this number is congruent to $2$ modulo $4$,
      there must be at least two more, so at least
      $\frac{n}{2}+\frac{5}{2}$ wasted triangles in region $A$. This
      implies the claim.
  \end{itemize}
  We have proved that the number of wasted triangles in region $A$ is
  at least $\frac{n}{2}+\frac{3}{2}$. Since by symmetry, the same
  applies to region $B$, the total number of wasted triangles on the
  whole grid is at least $n+3$, so by {\eqref{eqn:k}}, the winning
  path has length at most $\frac{n^2+1}{2} - \frac{n+3}{4} =
  \frac{n^2}{2} - \frac{n}{4} - \frac{1}{4}$. Moreover, since $n\equiv
  3\pmod{8}$, the latter quantity is not an integer; the largest
  integer below it is $\frac{n^2}{2} - \frac{n}{4} - \frac{3}{4}$,
  proving the lemma.
\end{proof}

The following theorem summarizes the findings of
Lemmas~\ref{lem:bound1} and {\ref{lem:bound2}}.

\begin{theorem}\label{thm:bounds}
  Assume $n\geq 6$, and let $k$ be the length of a winning path on a
  Hex board of size $n\times n$. Then we have:
  \begin{enumerate}\alphalabels
  \item If $n\equiv 0\pmod{4}$, then $k\leq\frac{n^2}{2}-\frac{n}{4}$.
  \item If $n\equiv 1\pmod{4}$, then $k\leq\frac{n^2}{2}-\frac{n}{4}-\frac{1}{4}$.
  \item If $n\equiv 2\pmod{4}$, then $k\leq\frac{n^2}{2}-\frac{n}{4}-\frac{1}{2}$.
  \item If $n\equiv 3\pmod{8}$, then $k\leq\frac{n^2}{2}-\frac{n}{4}-\frac{3}{4}$.
  \item If $n\equiv 7\pmod{8}$, then $k\leq\frac{n^2}{2}-\frac{n}{4}+\frac{1}{4}$.
  \end{enumerate}
\end{theorem}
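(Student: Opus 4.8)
The plan is to derive each of the five inequalities by combining the two lemmas just proved with a parity/divisibility refinement, organized purely by the residue of $n$ modulo $4$ (and, in one split case, modulo $8$). The key observation is that Lemmas~\ref{lem:bound1} and~\ref{lem:bound2} already give the real content; the theorem is essentially a bookkeeping statement that rounds the bound $k\leq\frac{n^2}{2}-\frac{n}{4}+\frac14$ of Lemma~\ref{lem:bound1} down to the nearest \emph{achievable} value of $k$. Since by equation~\eqref{eqn:k} we have $k=\frac{n^2+1}{2}-\frac{t}{4}$ with $t$ a nonnegative integer and $k$ itself an integer, the admissible values of $k$ form an arithmetic progression with step $\frac14$ subject to the integrality constraint. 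So the whole theorem reduces to asking, for each residue class of $n$, what is the largest integer $k$ satisfying both $k\leq\frac{n^2}{2}-\frac{n}{4}+\frac14$ and $k\equiv\frac{n^2+1}{2}\pmod{\text{appropriate step}}$.

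First I would handle cases (a), (b), (c) uniformly from Lemma~\ref{lem:bound1}. For each residue $n\bmod 4$, I compute $\frac{n^2}{2}-\frac{n}{4}+\frac14$ as a rational number, determine whether it is an integer, and if not, round down to the largest integer not exceeding it. Concretely: when $n\equiv 0$, the quantity $\frac{n^2}{2}-\frac{n}{4}+\frac14$ is not an integer (it has fractional part $\frac14$), and the next integer below is $\frac{n^2}{2}-\frac{n}{4}$, giving (a); when $n\equiv 1$, the bound equals $\frac{n^2}{2}-\frac{n}{4}+\frac14$ but this has fractional part such that the largest integer below is $\frac{n^2}{2}-\frac{n}{4}-\frac14$, giving (b); when $n\equiv 2$, the largest admissible integer is $\frac{n^2}{2}-\frac{n}{4}-\frac12$, giving (c). Each of these is a one-line residue computation: substitute $n=4m,\,4m+1,\,4m+2$ respectively and simplify. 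The only subtlety is making sure $k$ is forced to be an integer, which it is by definition (the number of stones), so the bound may always be rounded down to the nearest integer.

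For the remaining two cases I split $n\equiv 3\pmod 4$ into the two subcases $n\equiv 3\pmod 8$ and $n\equiv 7\pmod 8$. When $n\equiv 3\pmod 8$, part (d) is exactly the statement of Lemma~\ref{lem:bound2}, so nothing new is needed. When $n\equiv 7\pmod 8$, I fall back on Lemma~\ref{lem:bound1}: substituting $n=8m+7$ into $\frac{n^2}{2}-\frac{n}{4}+\frac14$ shows this value \emph{is} already an integer, so no rounding occurs and the bound stands as $\frac{n^2}{2}-\frac{n}{4}+\frac14$, giving (e). The contrast between (d) and (e) is precisely why the $n\equiv 3\pmod 4$ class must be split: the extra divisibility-by-$4$ argument of Lemma~\ref{lem:bound2} improves the bound only when $n\equiv 3\pmod 8$, whereas for $n\equiv 7\pmod 8$ the Lemma~\ref{lem:bound1} bound is already integral and cannot be improved by this method.

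I expect no genuine obstacle here; the main care required is purely arithmetic. The one place to be vigilant is verifying, in each residue class, the exact fractional part of $\frac{n^2}{2}-\frac{n}{4}+\frac14$ so that the rounding direction is correct, and confirming that Lemma~\ref{lem:bound2} applies verbatim to produce (d) rather than needing re-derivation. In short, the proof is a short case analysis: invoke Lemma~\ref{lem:bound1} for (a), (b), (c), (e) with an integrality rounding step, and invoke Lemma~\ref{lem:bound2} directly for (d).
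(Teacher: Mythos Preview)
Your proposal is correct and follows essentially the same approach as the paper: parts (a), (b), (c), and (e) come from Lemma~\ref{lem:bound1} by rounding $\frac{n^2}{2}-\frac{n}{4}+\frac14$ down to the nearest integer (with (e) being the case where that quantity is already an integer), and part (d) is Lemma~\ref{lem:bound2} verbatim. The detour through equation~\eqref{eqn:k} and ``admissible values of $k$'' is unnecessary---the only fact you need is that $k$, being a count of stones, is an integer---but it does no harm.
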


\begin{proof}
  Properties (a), (b), (c), and (e) are direct consequences of
  Lemma~\ref{lem:bound1}, because in each case, the claimed bound is
  the largest integer below $\frac{n^2}{2}-\frac{n}{4} +
  \frac{1}{4}$. Property (d) is Lemma~\ref{lem:bound2}.
\end{proof}

\section{Sharpness of the bounds}

\begin{theorem}
  The bounds in Theorem~\ref{thm:bounds} are sharp for all $n\geq 6$
  except for $n=9$.
\end{theorem}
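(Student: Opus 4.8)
The plan is to prove sharpness constructively: for every $n\ge 5$ with $n\ne 9$, I would exhibit an explicit winning path whose length equals the upper bound supplied by Theorem~\ref{thm:bounds}. Since that theorem already gives the matching upper bound, producing a single path of the correct length for each such $n$ suffices. I would organize the argument as a finite list of base cases settled by explicit examples, followed by an inductive step in steps of $8$ driven by a length-increasing recursion.

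For the base cases $5\le n\le 20$ with $n\ne 9$, I would take the configurations in Figure~\ref{fig:witnesses} and verify three properties for each: that the black stones form a connection from the top edge to the bottom edge; that this connection is minimal, so that it is genuinely a winning path rather than merely a winning connection; and that its stone count is the value recorded in Figure~\ref{fig:path-lengths}. Comparing the ``length'' and ``bound'' columns of that table shows the two agree exactly when $n\ne 9$, which establishes sharpness throughout this range. The case $n=9$ is genuinely exceptional, the table recording a maximal length of $37$ against a bound of $38$; here I would invoke an exhaustive search to confirm that no winning path of length $38$ exists on the $9\times 9$ board, so that the bound is truly not attained.

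For $n>20$ I would argue by induction using the recursive construction of Figure~\ref{fig:witnesses-extend}, which converts a winning path of length $k$ on an $n\times n$ board (of one of the three displayed types) into a winning path of length $k+8n+30$ on an $(n+8)\times(n+8)$ board. The decisive observation is that this increment matches the growth of the bound exactly: because $n$ and $n+8$ share the same residue modulo $8$, the additive constant in Theorem~\ref{thm:bounds} is unchanged, and $\frac{(n+8)^2}{2}-\frac{n+8}{4}-\bigl(\frac{n^2}{2}-\frac{n}{4}\bigr)=8n+30$. Hence an optimal path at size $n$ extends to an optimal path at size $n+8$. Seeding the recursion with the paths for $13\le n\le 20$ from Figure~\ref{fig:witnesses}, which together cover every residue class modulo $8$, propagates optimality to all $n>20$, provided each seed is of one of the three types and the construction carries the three types back into themselves. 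Note that $n=9$ never poisons the induction, since the residue-$1$-mod-$8$ thread is seeded at $n=17$, not at $n=9$.

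The main obstacle is verifying the recursion itself. One must check with care that the enlarged configuration of Figure~\ref{fig:witnesses-extend} is again a \emph{minimal} winning connection: connectivity across the inserted band of eight new rows and columns has to be established, and one must confirm that no stone becomes redundant in the larger board, so that minimality is preserved. One must also confirm that the enlarged stone count is precisely $k+8n+30$, not merely close to it. Equally important, and easy to overlook, is showing that the three path types are closed under the operation, since this is exactly what allows the induction to be iterated indefinitely along each residue thread. Once these structural facts are secured, the optimality bookkeeping reduces to the one-line arithmetic above combined with Theorem~\ref{thm:bounds}.
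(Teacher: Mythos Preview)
Your proposal is correct and follows essentially the same approach as the paper: explicit witnesses for $5\le n\le 20$ (with exhaustive search for the exceptional $n=9$), followed by the $n\mapsto n+8$ recursion of Figure~\ref{fig:witnesses-extend} seeded at $n=13,\ldots,20$, together with the arithmetic check that $k\mapsto k+8n+30$ preserves the constant $c$ in $k=\frac{n^2}{2}-\frac{n}{4}+c$. Your explicit attention to closure of the three path types under the recursion and to preservation of minimality is a point the paper leaves implicit.
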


\begin{proof}
  We first consider the case $n\leq 20$. Figure~\ref{fig:witnesses}
  shows a winning path for each $n=1,\ldots,20$.  The length of each
  path is listed in Figure~\ref{fig:path-lengths}, along with each
  applicable bound from Theorem~\ref{thm:bounds}. These paths witness
  the fact that the bounds are sharp except for $n=9$. Conversely, the
  bounds of Theorem~\ref{thm:bounds} imply that the paths in
  Figure~\ref{fig:witnesses} are optimal, except perhaps for
  $n=1,2,3,4,5$ and $n=9$. The optimality of the paths for these
  remaining cases can be shown by exhaustive search.

  Next, we consider the case $n>20$. Figure~\ref{fig:witnesses-extend}
  shows three constructions for turning a winning path for board size
  $n$ into a winning path for board size $n+8$. By applying these
  constructions recursively, starting with the winning paths for
  $n=13,\ldots,20$ from Figure~\ref{fig:witnesses}, we obtain winning
  paths for all $n>20$. As a matter of fact, the paths for
  $n=13,14,15,16,18,19,20$ in Figure~\ref{fig:witnesses} were already
  obtained by this method.

  It remains to show that the resulting paths match the bounds of
  Theorem~\ref{thm:bounds}. It is easy to check that given a winning
  path of length $k$ for board size $n$, the constructions of
  Figure~\ref{fig:witnesses-extend} yield a winning path for board
  size $n+8$ of length $k'=k+8n+30$. (For $n=6$, this can be checked
  by counting the stones in Figure~\ref{fig:witnesses-extend}. Then
  observe that each pattern on the right-hand side of
  Figure~\ref{fig:witnesses-extend} has 2 parallel bands of stones
  running along each of the four board edges. Therefore, each time we
  increase the size of the pattern by one, its number of stones
  increases by 8.)

  Now suppose that $k=\frac{n^2}{2}-\frac{n}{4}+c$. Then it follows
  that $k'=k+8n+30=\frac{(n+8)^2}{2}-\frac{n+8}{4}+c$. Therefore, if a
  path for board size $n$ matches its bound of
  Theorem~\ref{thm:bounds}, then so does the constructed path for
  board size $n+8$. The result follows by induction.
\end{proof}

\begin{remark}
  One can also ask \emph{how many} optimal winning paths there are for
  each board size. The number of optimal winning paths for
  $n=1,\ldots,20$ can be calculated by enumeration, and is shown in
  Figure~\ref{fig:path-lengths}. No formula is currently
  known for these numbers.

  One may note that, compared to other nearby $n$, the number of
  solutions is unusually large for $n=9$, $n=11$, and $n=19$. This may
  be related to the fact that the quantity
  $\frac{n^2}{2}-\frac{n}{4}-k$ is slightly larger than usual in these
  cases, where $k$ is the path length. This yields an above-average
  number of wasted triangles and therefore more potential freedom in
  choosing winning paths. On the other hand, the number of solutions
  is unusually small for $n=10$ and $n=12$, perhaps reflecting the
  fact that these solutions are qualitatively different from those of
  smaller board sizes.

  A possible intuitive explanation for the discontinuities in the path
  counts of Figure~\ref{fig:path-lengths} is that we are only
  reporting the number of longest paths, rather than all paths. So if
  there were, say, 1000 paths of length $k$ and $1$ path of length
  $k+1$, we would report the number ``1'', but if instead, there were
  $0$ paths of length $k+1$, we would report ``1000''. Although the
  difference between 1 solution and 0 solutions is very small, it
  would cause a large change in what is reported in
  Figure~\ref{fig:path-lengths}.
\end{remark}

\begin{remark}
  All of the winning paths shown in Figures~\ref{fig:witnesses} and
  {\ref{fig:witnesses-extend}} start and end in a corner of the
  board. However, not all optimal winning paths have this
  property. For example, only 2 of the 12 optimal winning paths for
  $10\times 10$ start and end in a corner. Here are two optimal paths
  that do not go corner-to-corner:
  \[
  \begin{hexboard}[scale=0.35] 
    \rotation{-30}
    \board(10,10)
    \black(5,1)\black(1,3)\black(2,2)\black(3,2)\black(1,4)
    \black(5,2)\black(6,2)\black(7,2)\black(8,2)\black(9,2)
    \black(10,2)\black(10,3)\black(3,3)\black(3,4)\black(4,4)
    \black(6,4)\black(7,4)\black(8,4)\black(10,4)\black(1,5)
    \black(4,5)\black(5,5)\black(8,5)\black(10,5)\black(1,6)
    \black(2,6)\black(6,6)\black(7,6)\black(9,6)\black(2,7)
    \black(4,7)\black(5,7)\black(9,7)\black(10,7)\black(1,8)
    \black(3,8)\black(6,8)\black(7,8)\black(10,8)\black(1,9)
    \black(3,9)\black(4,9)\black(5,9)\black(7,9)\black(8,9)
    \black(9,9)\black(1,10)
  \end{hexboard}
  \quad
  \begin{hexboard}[scale=0.35] 
    \rotation{-30}
    \board(10,10)
    \black(3,1)\black(1,3)\black(2,2)\black(4,2)\black(1,4)
    \black(5,2)\black(6,2)\black(7,2)\black(8,2)\black(9,2)
    \black(10,2)\black(10,3)\black(3,3)\black(3,4)\black(4,4)
    \black(6,4)\black(7,4)\black(8,4)\black(10,4)\black(1,5)
    \black(4,5)\black(5,5)\black(8,5)\black(10,5)\black(1,6)
    \black(2,6)\black(6,6)\black(7,6)\black(9,6)\black(2,7)
    \black(4,7)\black(5,7)\black(9,7)\black(10,7)\black(1,8)
    \black(3,8)\black(6,8)\black(7,8)\black(10,8)\black(1,9)
    \black(3,10)\black(4,9)\black(5,9)\black(7,9)\black(8,9)
    \black(9,9)\black(2,9)
  \end{hexboard}
  \]
\end{remark}

\begin{remark}
  Manually finding an optimal winning path can be a fun and difficult
  puzzle. Probably the most difficult cases are $n=10$, $n=12$, and
  $n=17$, because the solutions do not seem to follow any pattern that
  can be guessed from smaller board sizes.
\end{remark}

\begin{figure}[p]
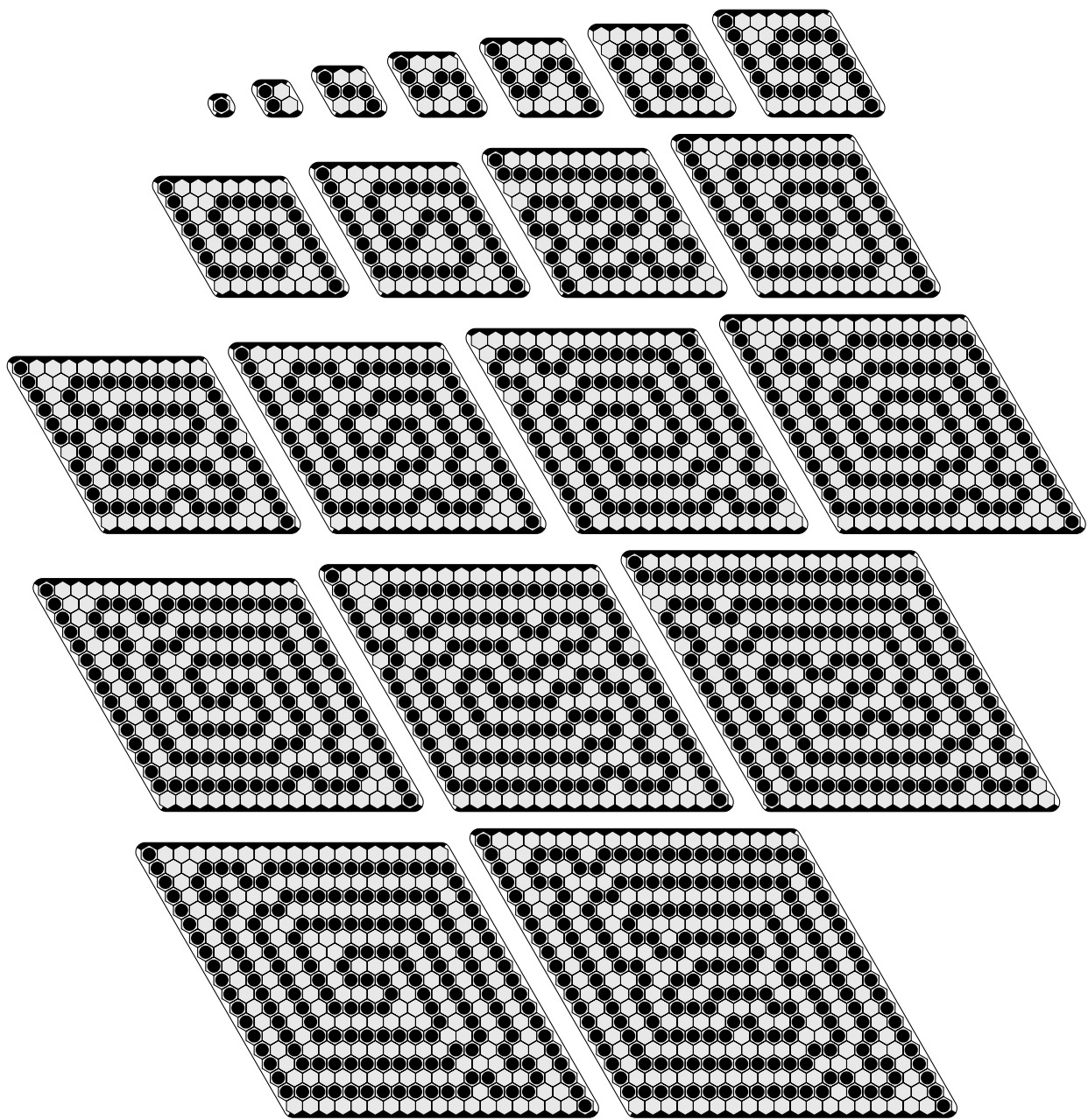

  \newlength{\increment}
  \setlength{\increment}{0.1225cm}
  \newcommand{\offset}{\hspace{0.65em}}
  \[
\begin{hexboard}[scale=0.35] 
  \rotation{-30}
  \board(1,1)
  \black(1,1)
\end{hexboard}
\offset\hspace{0\increment}
\begin{hexboard}[scale=0.35] 
  \rotation{-30}
  \board(2,2)
  \black(1,1)\black(1,2)
\end{hexboard}
\offset\hspace{-1\increment}
\begin{hexboard}[scale=0.35] 
  \rotation{-30}
  \board(3,3)
  \black(1,1)\black(1,2)\black(2,2)\black(3,2)\black(3,3)
\end{hexboard}
\offset\hspace{-2\increment}
\begin{hexboard}[scale=0.35] 
  \rotation{-30}
  \board(4,4)
  \black(1,1)\black(1,2)\black(3,2)\black(4,2)\black(1,3)
  \black(2,3)\black(4,3)\black(4,4)
\end{hexboard}
\offset\hspace{-3\increment}
\begin{hexboard}[scale=0.35] 
  \rotation{-30}
  \board(5,5)
  \black(1,1)\black(1,2)\black(4,2)\black(5,2)\black(1,3)
  \black(3,3)\black(5,3)\black(1,4)\black(2,4)\black(5,4)
  \black(5,5)
\end{hexboard}
\offset\hspace{-4\increment}
\begin{hexboard}[scale=0.35] 
  \rotation{-30}
  \board(6,6)
  \black(6,1)\black(2,2)\black(3,2)\black(4,2)\black(6,2)
  \black(1,3)\black(4,3)\black(6,3)\black(1,4)\black(3,4)
  \black(6,4)\black(1,5)\black(3,5)\black(4,5)\black(5,5)
  \black(1,6)
\end{hexboard}
\offset\hspace{-5\increment}
\begin{hexboard}[scale=0.35] 
  \rotation{-30}
  \board(7,7)
  \black(1,1)\black(1,2)\black(4,2)\black(5,2)\black(6,2)
  \black(7,2)\black(1,3)\black(3,3)\black(7,3)\black(1,4)
  \black(3,4)\black(4,4)\black(5,4)\black(7,4)\black(1,5)
  \black(5,5)\black(7,5)\black(1,6)\black(2,6)\black(3,6)
  \black(4,6)\black(7,6)\black(7,7)
\end{hexboard}
\]
\[
\begin{hexboard}[scale=0.35] 
  \rotation{-30}
  \board(8,8)
  \black(1,1)\black(1,2)\black(4,2)\black(5,2)\black(6,2)
  \black(7,2)\black(8,2)\black(1,3)\black(3,3)\black(8,3)
  \black(1,4)\black(3,4)\black(5,4)\black(6,4)\black(8,4)
  \black(1,5)\black(3,5)\black(4,5)\black(6,5)\black(8,5)
  \black(1,6)\black(6,6)\black(8,6)\black(1,7)\black(2,7)
  \black(3,7)\black(4,7)\black(5,7)\black(8,7)\black(8,8)
\end{hexboard}
\offset\hspace{-7\increment}
\begin{hexboard}[scale=0.35] 
  \rotation{-30}
  \board(9,9)
  \black(1,1)\black(1,2)\black(4,2)\black(5,2)\black(6,2)
  \black(7,2)\black(8,2)\black(9,2)\black(1,3)\black(3,3)
  \black(9,3)\black(1,4)\black(3,4)\black(6,4)\black(7,4)
  \black(9,4)\black(1,5)\black(3,5)\black(5,5)\black(7,5)
  \black(9,5)\black(1,6)\black(3,6)\black(4,6)\black(7,6)
  \black(9,6)\black(1,7)\black(7,7)\black(9,7)\black(1,8)
  \black(2,8)\black(3,8)\black(4,8)\black(5,8)\black(6,8)
  \black(9,8)\black(9,9)
\end{hexboard}
\offset\hspace{-8\increment}
\begin{hexboard}[scale=0.35] 
  \rotation{-30}
  \board(10,10)
  \black(1,1)\black(1,2)\black(2,2)\black(3,2)\black(4,2)
  \black(5,2)\black(6,2)\black(7,2)\black(8,2)\black(9,2)
  \black(10,2)\black(10,3)\black(2,4)\black(3,4)\black(4,4)
  \black(6,4)\black(7,4)\black(8,4)\black(10,4)\black(1,5)
  \black(4,5)\black(5,5)\black(8,5)\black(10,5)\black(1,6)
  \black(2,6)\black(6,6)\black(7,6)\black(9,6)\black(2,7)
  \black(4,7)\black(5,7)\black(9,7)\black(10,7)\black(1,8)
  \black(3,8)\black(6,8)\black(7,8)\black(10,8)\black(1,9)
  \black(3,9)\black(4,9)\black(5,9)\black(7,9)\black(8,9)
  \black(9,9)\black(1,10)
\end{hexboard}
\offset\hspace{-9\increment}
\begin{hexboard}[scale=0.35] 
  \rotation{-30}
  \board(11,11)
  \black(1,1)\black(1,2)\black(4,2)\black(5,2)\black(6,2)
  \black(7,2)\black(8,2)\black(9,2)\black(10,2)\black(11,2)
  \black(1,3)\black(3,3)\black(11,3)\black(1,4)\black(3,4)
  \black(6,4)\black(7,4)\black(8,4)\black(9,4)\black(11,4)
  \black(1,5)\black(3,5)\black(5,5)\black(9,5)\black(11,5)
  \black(1,6)\black(3,6)\black(5,6)\black(6,6)\black(7,6)
  \black(9,6)\black(11,6)\black(1,7)\black(3,7)\black(7,7)
  \black(9,7)\black(11,7)\black(1,8)\black(3,8)\black(4,8)
  \black(5,8)\black(6,8)\black(9,8)\black(11,8)\black(1,9)
  \black(9,9)\black(11,9)\black(1,10)\black(2,10)\black(3,10)
  \black(4,10)\black(5,10)\black(6,10)\black(7,10)\black(8,10)
  \black(11,10)\black(11,11)
\end{hexboard}
\]
\[
\begin{hexboard}[scale=0.35] 
  \rotation{-30}
  \board(12,12)
  \black(1,1)\black(1,2)\black(4,2)\black(5,2)\black(6,2)
  \black(7,2)\black(8,2)\black(9,2)\black(10,2)\black(11,2)
  \black(12,2)\black(1,3)\black(3,3)\black(12,3)\black(1,4)
  \black(3,4)\black(4,4)\black(6,4)\black(7,4)\black(8,4)
  \black(9,4)\black(10,4)\black(12,4)\black(1,5)\black(4,5)
  \black(5,5)\black(10,5)\black(12,5)\black(1,6)\black(2,6)
  \black(6,6)\black(7,6)\black(8,6)\black(9,6)\black(11,6)
  \black(2,7)\black(4,7)\black(5,7)\black(11,7)\black(12,7)
  \black(1,8)\black(3,8)\black(6,8)\black(7,8)\black(8,8)
  \black(9,8)\black(12,8)\black(1,9)\black(3,9)\black(4,9)
  \black(5,9)\black(9,9)\black(10,9)\black(12,9)\black(1,10)
  \black(6,10)\black(7,10)\black(10,10)\black(12,10)\black(1,11)
  \black(2,11)\black(3,11)\black(4,11)\black(5,11)\black(7,11)
  \black(8,11)\black(9,11)\black(12,11)\black(12,12)
\end{hexboard}
\offset\hspace{-11\increment}
\begin{hexboard}[scale=0.35] 
  \rotation{-30}
  \board(13,13)
  \black(1,1)\black(1,2)\black(4,2)\black(5,2)\black(6,2)
  \black(8,2)\black(9,2)\black(10,2)\black(11,2)\black(12,2)
  \black(13,2)\black(1,3)\black(3,3)\black(6,3)\black(7,3)
  \black(13,3)\black(1,4)\black(3,4)\black(4,4)\black(8,4)
  \black(9,4)\black(10,4)\black(11,4)\black(13,4)\black(1,5)
  \black(4,5)\black(6,5)\black(7,5)\black(11,5)\black(13,5)
  \black(1,6)\black(3,6)\black(5,6)\black(8,6)\black(9,6)
  \black(11,6)\black(13,6)\black(1,7)\black(3,7)\black(5,7)
  \black(7,7)\black(9,7)\black(11,7)\black(13,7)\black(1,8)
  \black(3,8)\black(5,8)\black(6,8)\black(9,8)\black(11,8)
  \black(13,8)\black(1,9)\black(3,9)\black(7,9)\black(8,9)
  \black(10,9)\black(13,9)\black(1,10)\black(3,10)\black(4,10)
  \black(5,10)\black(6,10)\black(10,10)\black(11,10)\black(13,10)
  \black(1,11)\black(7,11)\black(8,11)\black(11,11)\black(13,11)
  \black(1,12)\black(2,12)\black(3,12)\black(4,12)\black(5,12)
  \black(6,12)\black(8,12)\black(9,12)\black(10,12)\black(13,12)
  \black(13,13)
\end{hexboard}
\offset\hspace{-12\increment}
\begin{hexboard}[scale=0.35] 
  \rotation{-30}
  \board(14,14)
  \black(14,1)\black(2,2)\black(3,2)\black(4,2)\black(6,2)
  \black(7,2)\black(8,2)\black(9,2)\black(10,2)\black(11,2)
  \black(12,2)\black(14,2)\black(1,3)\black(4,3)\black(5,3)
  \black(12,3)\black(14,3)\black(1,4)\black(2,4)\black(6,4)
  \black(7,4)\black(8,4)\black(9,4)\black(10,4)\black(12,4)
  \black(14,4)\black(2,5)\black(4,5)\black(5,5)\black(10,5)
  \black(12,5)\black(14,5)\black(1,6)\black(3,6)\black(6,6)
  \black(7,6)\black(8,6)\black(10,6)\black(12,6)\black(14,6)
  \black(1,7)\black(3,7)\black(5,7)\black(8,7)\black(10,7)
  \black(12,7)\black(14,7)\black(1,8)\black(3,8)\black(5,8)
  \black(7,8)\black(10,8)\black(12,8)\black(14,8)\black(1,9)
  \black(3,9)\black(5,9)\black(7,9)\black(8,9)\black(9,9)
  \black(12,9)\black(14,9)\black(1,10)\black(3,10)\black(5,10)
  \black(10,10)\black(11,10)\black(13,10)\black(1,11)\black(3,11)
  \black(5,11)\black(6,11)\black(7,11)\black(8,11)\black(9,11)
  \black(13,11)\black(14,11)\black(1,12)\black(3,12)\black(10,12)
  \black(11,12)\black(14,12)\black(1,13)\black(3,13)\black(4,13)
  \black(5,13)\black(6,13)\black(7,13)\black(8,13)\black(9,13)
  \black(11,13)\black(12,13)\black(13,13)\black(1,14)
\end{hexboard}
\offset\hspace{-13\increment}
\begin{hexboard}[scale=0.35] 
  \rotation{-30}
  \board(15,15)
  \black(1,1)\black(1,2)\black(4,2)\black(5,2)\black(6,2)
  \black(8,2)\black(9,2)\black(10,2)\black(11,2)\black(12,2)
  \black(13,2)\black(14,2)\black(15,2)\black(1,3)\black(3,3)
  \black(6,3)\black(7,3)\black(15,3)\black(1,4)\black(3,4)
  \black(4,4)\black(8,4)\black(9,4)\black(10,4)\black(11,4)
  \black(12,4)\black(13,4)\black(15,4)\black(1,5)\black(4,5)
  \black(6,5)\black(7,5)\black(13,5)\black(15,5)\black(1,6)
  \black(3,6)\black(5,6)\black(8,6)\black(9,6)\black(10,6)
  \black(11,6)\black(13,6)\black(15,6)\black(1,7)\black(3,7)
  \black(5,7)\black(7,7)\black(11,7)\black(13,7)\black(15,7)
  \black(1,8)\black(3,8)\black(5,8)\black(7,8)\black(8,8)
  \black(9,8)\black(11,8)\black(13,8)\black(15,8)\black(1,9)
  \black(3,9)\black(5,9)\black(9,9)\black(11,9)\black(13,9)
  \black(15,9)\black(1,10)\black(3,10)\black(5,10)\black(6,10)
  \black(7,10)\black(8,10)\black(11,10)\black(13,10)\black(15,10)
  \black(1,11)\black(3,11)\black(9,11)\black(10,11)\black(12,11)
  \black(15,11)\black(1,12)\black(3,12)\black(4,12)\black(5,12)
  \black(6,12)\black(7,12)\black(8,12)\black(12,12)\black(13,12)
  \black(15,12)\black(1,13)\black(9,13)\black(10,13)\black(13,13)
  \black(15,13)\black(1,14)\black(2,14)\black(3,14)\black(4,14)
  \black(5,14)\black(6,14)\black(7,14)\black(8,14)\black(10,14)
  \black(11,14)\black(12,14)\black(15,14)\black(15,15)
\end{hexboard}
\]
\[
\begin{hexboard}[scale=0.35] 
  \rotation{-30}
  \board(16,16)
  \black(1,1)\black(1,2)\black(4,2)\black(5,2)\black(6,2)
  \black(8,2)\black(9,2)\black(10,2)\black(11,2)\black(12,2)
  \black(13,2)\black(14,2)\black(15,2)\black(16,2)\black(1,3)
  \black(3,3)\black(6,3)\black(7,3)\black(16,3)\black(1,4)
  \black(3,4)\black(4,4)\black(8,4)\black(9,4)\black(10,4)
  \black(11,4)\black(12,4)\black(13,4)\black(14,4)\black(16,4)
  \black(1,5)\black(4,5)\black(6,5)\black(7,5)\black(14,5)
  \black(16,5)\black(1,6)\black(3,6)\black(5,6)\black(8,6)
  \black(9,6)\black(10,6)\black(11,6)\black(12,6)\black(14,6)
  \black(16,6)\black(1,7)\black(3,7)\black(5,7)\black(7,7)
  \black(12,7)\black(14,7)\black(16,7)\black(1,8)\black(3,8)
  \black(5,8)\black(7,8)\black(9,8)\black(10,8)\black(12,8)
  \black(14,8)\black(16,8)\black(1,9)\black(3,9)\black(5,9)
  \black(7,9)\black(8,9)\black(10,9)\black(12,9)\black(14,9)
  \black(16,9)\black(1,10)\black(3,10)\black(5,10)\black(10,10)
  \black(12,10)\black(14,10)\black(16,10)\black(1,11)\black(3,11)
  \black(5,11)\black(6,11)\black(7,11)\black(8,11)\black(9,11)
  \black(12,11)\black(14,11)\black(16,11)\black(1,12)\black(3,12)
  \black(10,12)\black(11,12)\black(13,12)\black(16,12)\black(1,13)
  \black(3,13)\black(4,13)\black(5,13)\black(6,13)\black(7,13)
  \black(8,13)\black(9,13)\black(13,13)\black(14,13)\black(16,13)
  \black(1,14)\black(10,14)\black(11,14)\black(14,14)\black(16,14)
  \black(1,15)\black(2,15)\black(3,15)\black(4,15)\black(5,15)
  \black(6,15)\black(7,15)\black(8,15)\black(9,15)\black(11,15)
  \black(12,15)\black(13,15)\black(16,15)\black(16,16)
\end{hexboard}
\offset\hspace{-15\increment}
\begin{hexboard}[scale=0.35] 
  \rotation{-30}
  \board(17,17)
  \black(1,1)\black(1,2)\black(4,2)\black(5,2)\black(6,2)
  \black(7,2)\black(8,2)\black(9,2)\black(10,2)\black(11,2)
  \black(12,2)\black(13,2)\black(14,2)\black(15,2)\black(16,2)
  \black(17,2)\black(1,3)\black(3,3)\black(17,3)\black(1,4)
  \black(3,4)\black(4,4)\black(6,4)\black(7,4)\black(8,4)
  \black(9,4)\black(10,4)\black(11,4)\black(13,4)\black(14,4)
  \black(15,4)\black(17,4)\black(1,5)\black(4,5)\black(5,5)
  \black(11,5)\black(12,5)\black(15,5)\black(17,5)\black(1,6)
  \black(2,6)\black(6,6)\black(7,6)\black(8,6)\black(9,6)
  \black(13,6)\black(14,6)\black(16,6)\black(2,7)\black(4,7)
  \black(5,7)\black(9,7)\black(11,7)\black(12,7)\black(16,7)
  \black(17,7)\black(1,8)\black(3,8)\black(6,8)\black(7,8)
  \black(9,8)\black(10,8)\black(13,8)\black(14,8)\black(17,8)
  \black(1,9)\black(3,9)\black(5,9)\black(7,9)\black(11,9)
  \black(12,9)\black(14,9)\black(15,9)\black(17,9)\black(1,10)
  \black(3,10)\black(5,10)\black(7,10)\black(8,10)\black(9,10)
  \black(10,10)\black(15,10)\black(17,10)\black(1,11)\black(3,11)
  \black(5,11)\black(11,11)\black(12,11)\black(13,11)\black(15,11)
  \black(17,11)\black(1,12)\black(3,12)\black(5,12)\black(6,12)
  \black(7,12)\black(8,12)\black(9,12)\black(10,12)\black(13,12)
  \black(15,12)\black(17,12)\black(1,13)\black(3,13)\black(11,13)
  \black(12,13)\black(14,13)\black(17,13)\black(1,14)\black(3,14)
  \black(4,14)\black(5,14)\black(6,14)\black(7,14)\black(8,14)
  \black(9,14)\black(10,14)\black(14,14)\black(15,14)\black(17,14)
  \black(1,15)\black(11,15)\black(12,15)\black(15,15)\black(17,15)
  \black(1,16)\black(2,16)\black(3,16)\black(4,16)\black(5,16)
  \black(6,16)\black(7,16)\black(8,16)\black(9,16)\black(10,16)
  \black(12,16)\black(13,16)\black(14,16)\black(17,16)
  \black(17,17)
\end{hexboard}
\offset\hspace{-16\increment}
\begin{hexboard}[scale=0.35] 
  \rotation{-30}
  \board(18,18)
  \black(1,1)\black(1,2)\black(2,2)\black(3,2)\black(4,2)
  \black(5,2)\black(6,2)\black(7,2)\black(8,2)\black(9,2)
  \black(10,2)\black(11,2)\black(12,2)\black(13,2)\black(14,2)
  \black(15,2)\black(16,2)\black(17,2)\black(18,2)\black(18,3)
  \black(2,4)\black(3,4)\black(4,4)\black(6,4)\black(7,4)
  \black(8,4)\black(9,4)\black(10,4)\black(11,4)\black(12,4)
  \black(13,4)\black(14,4)\black(15,4)\black(16,4)\black(18,4)
  \black(1,5)\black(4,5)\black(5,5)\black(16,5)\black(18,5)
  \black(1,6)\black(2,6)\black(6,6)\black(7,6)\black(8,6)
  \black(9,6)\black(10,6)\black(11,6)\black(12,6)\black(13,6)
  \black(14,6)\black(16,6)\black(18,6)\black(2,7)\black(4,7)
  \black(5,7)\black(14,7)\black(16,7)\black(18,7)\black(1,8)
  \black(3,8)\black(6,8)\black(7,8)\black(8,8)\black(10,8)
  \black(11,8)\black(12,8)\black(14,8)\black(16,8)\black(18,8)
  \black(1,9)\black(3,9)\black(5,9)\black(8,9)\black(9,9)
  \black(12,9)\black(14,9)\black(16,9)\black(18,9)\black(1,10)
  \black(3,10)\black(5,10)\black(6,10)\black(10,10)\black(11,10)
  \black(13,10)\black(16,10)\black(18,10)\black(1,11)\black(3,11)
  \black(6,11)\black(8,11)\black(9,11)\black(13,11)\black(14,11)
  \black(16,11)\black(18,11)\black(1,12)\black(3,12)\black(5,12)
  \black(7,12)\black(10,12)\black(11,12)\black(14,12)\black(16,12)
  \black(18,12)\black(1,13)\black(3,13)\black(5,13)\black(7,13)
  \black(8,13)\black(9,13)\black(11,13)\black(12,13)\black(13,13)
  \black(16,13)\black(18,13)\black(1,14)\black(3,14)\black(5,14)
  \black(14,14)\black(15,14)\black(17,14)\black(1,15)\black(3,15)
  \black(5,15)\black(6,15)\black(7,15)\black(8,15)\black(9,15)
  \black(10,15)\black(11,15)\black(12,15)\black(13,15)
  \black(17,15)\black(18,15)\black(1,16)\black(3,16)\black(14,16)
  \black(15,16)\black(18,16)\black(1,17)\black(3,17)\black(4,17)
  \black(5,17)\black(6,17)\black(7,17)\black(8,17)\black(9,17)
  \black(10,17)\black(11,17)\black(12,17)\black(13,17)
  \black(15,17)\black(16,17)\black(17,17)\black(1,18)
\end{hexboard}
\]
\[
\begin{hexboard}[scale=0.35] 
  \rotation{-30}
  \board(19,19)
  \black(1,1)\black(1,2)\black(4,2)\black(5,2)\black(6,2)
  \black(8,2)\black(9,2)\black(10,2)\black(11,2)\black(12,2)
  \black(13,2)\black(14,2)\black(15,2)\black(16,2)\black(17,2)
  \black(18,2)\black(19,2)\black(1,3)\black(3,3)\black(6,3)
  \black(7,3)\black(19,3)\black(1,4)\black(3,4)\black(4,4)
  \black(8,4)\black(9,4)\black(10,4)\black(11,4)\black(12,4)
  \black(13,4)\black(14,4)\black(15,4)\black(16,4)\black(17,4)
  \black(19,4)\black(1,5)\black(4,5)\black(6,5)\black(7,5)
  \black(17,5)\black(19,5)\black(1,6)\black(3,6)\black(5,6)
  \black(8,6)\black(9,6)\black(10,6)\black(11,6)\black(12,6)
  \black(13,6)\black(14,6)\black(15,6)\black(17,6)\black(19,6)
  \black(1,7)\black(3,7)\black(5,7)\black(7,7)\black(15,7)
  \black(17,7)\black(19,7)\black(1,8)\black(3,8)\black(5,8)
  \black(7,8)\black(10,8)\black(11,8)\black(12,8)\black(13,8)
  \black(15,8)\black(17,8)\black(19,8)\black(1,9)\black(3,9)
  \black(5,9)\black(7,9)\black(9,9)\black(13,9)\black(15,9)
  \black(17,9)\black(19,9)\black(1,10)\black(3,10)\black(5,10)
  \black(7,10)\black(9,10)\black(10,10)\black(11,10)\black(13,10)
  \black(15,10)\black(17,10)\black(19,10)\black(1,11)\black(3,11)
  \black(5,11)\black(7,11)\black(11,11)\black(13,11)\black(15,11)
  \black(17,11)\black(19,11)\black(1,12)\black(3,12)\black(5,12)
  \black(7,12)\black(8,12)\black(9,12)\black(10,12)\black(13,12)
  \black(15,12)\black(17,12)\black(19,12)\black(1,13)\black(3,13)
  \black(5,13)\black(13,13)\black(15,13)\black(17,13)\black(19,13)
  \black(1,14)\black(3,14)\black(5,14)\black(6,14)\black(7,14)
  \black(8,14)\black(9,14)\black(10,14)\black(11,14)\black(12,14)
  \black(15,14)\black(17,14)\black(19,14)\black(1,15)\black(3,15)
  \black(13,15)\black(14,15)\black(16,15)\black(19,15)\black(1,16)
  \black(3,16)\black(4,16)\black(5,16)\black(6,16)\black(7,16)
  \black(8,16)\black(9,16)\black(10,16)\black(11,16)\black(12,16)
  \black(16,16)\black(17,16)\black(19,16)\black(1,17)\black(13,17)
  \black(14,17)\black(17,17)\black(19,17)\black(1,18)\black(2,18)
  \black(3,18)\black(4,18)\black(5,18)\black(6,18)\black(7,18)
  \black(8,18)\black(9,18)\black(10,18)\black(11,18)\black(12,18)
  \black(14,18)\black(15,18)\black(16,18)\black(19,18)
  \black(19,19)
\end{hexboard}
\offset\hspace{-18\increment}
\begin{hexboard}[scale=0.35] 
  \rotation{-30}
  \board(20,20)
  \black(1,1)\black(1,2)\black(4,2)\black(5,2)\black(6,2)
  \black(8,2)\black(9,2)\black(10,2)\black(11,2)\black(12,2)
  \black(13,2)\black(14,2)\black(15,2)\black(16,2)\black(17,2)
  \black(18,2)\black(19,2)\black(20,2)\black(1,3)\black(3,3)
  \black(6,3)\black(7,3)\black(20,3)\black(1,4)\black(3,4)
  \black(4,4)\black(8,4)\black(9,4)\black(10,4)\black(11,4)
  \black(12,4)\black(13,4)\black(14,4)\black(15,4)\black(16,4)
  \black(17,4)\black(18,4)\black(20,4)\black(1,5)\black(4,5)
  \black(6,5)\black(7,5)\black(18,5)\black(20,5)\black(1,6)
  \black(3,6)\black(5,6)\black(8,6)\black(9,6)\black(10,6)
  \black(11,6)\black(12,6)\black(13,6)\black(14,6)\black(15,6)
  \black(16,6)\black(18,6)\black(20,6)\black(1,7)\black(3,7)
  \black(5,7)\black(7,7)\black(16,7)\black(18,7)\black(20,7)
  \black(1,8)\black(3,8)\black(5,8)\black(7,8)\black(8,8)
  \black(10,8)\black(11,8)\black(12,8)\black(13,8)\black(14,8)
  \black(16,8)\black(18,8)\black(20,8)\black(1,9)\black(3,9)
  \black(5,9)\black(8,9)\black(9,9)\black(14,9)\black(16,9)
  \black(18,9)\black(20,9)\black(1,10)\black(3,10)\black(5,10)
  \black(6,10)\black(10,10)\black(11,10)\black(12,10)\black(13,10)
  \black(15,10)\black(18,10)\black(20,10)\black(1,11)\black(3,11)
  \black(6,11)\black(8,11)\black(9,11)\black(15,11)\black(16,11)
  \black(18,11)\black(20,11)\black(1,12)\black(3,12)\black(5,12)
  \black(7,12)\black(10,12)\black(11,12)\black(12,12)\black(13,12)
  \black(16,12)\black(18,12)\black(20,12)\black(1,13)\black(3,13)
  \black(5,13)\black(7,13)\black(8,13)\black(9,13)\black(13,13)
  \black(14,13)\black(16,13)\black(18,13)\black(20,13)\black(1,14)
  \black(3,14)\black(5,14)\black(10,14)\black(11,14)\black(14,14)
  \black(16,14)\black(18,14)\black(20,14)\black(1,15)\black(3,15)
  \black(5,15)\black(6,15)\black(7,15)\black(8,15)\black(9,15)
  \black(11,15)\black(12,15)\black(13,15)\black(16,15)
  \black(18,15)\black(20,15)\black(1,16)\black(3,16)\black(14,16)
  \black(15,16)\black(17,16)\black(20,16)\black(1,17)\black(3,17)
  \black(4,17)\black(5,17)\black(6,17)\black(7,17)\black(8,17)
  \black(9,17)\black(10,17)\black(11,17)\black(12,17)\black(13,17)
  \black(17,17)\black(18,17)\black(20,17)\black(1,18)\black(14,18)
  \black(15,18)\black(18,18)\black(20,18)\black(1,19)\black(2,19)
  \black(3,19)\black(4,19)\black(5,19)\black(6,19)\black(7,19)
  \black(8,19)\black(9,19)\black(10,19)\black(11,19)\black(12,19)
  \black(13,19)\black(15,19)\black(16,19)\black(17,19)
  \black(20,19)\black(20,20)
\end{hexboard}
  \]
  \caption{Optimal winning paths for $n=1,\ldots,20$.}
  \label{fig:witnesses}
\end{figure}

\begin{figure}[p]
  \[
  \begin{array}{r|r|r|r}
    n & \mbox{length} & \mbox{bound} & \mbox{count} \\\hline
    1 & 1 & N/A & 1 \\
    2 & 2 & N/A & 3 \\
    3 & 5 & N/A & 1 \\
    4 & 8 & N/A & 4 \\
    5 & 11 & N/A & 23 \\\hline
    6 & 16 & 16 & 51 \\
    7 & 23 & 23 & 20 \\
    8 & 30 & 30 & 115 \\
    9 & \highlight{37} & 38 & 5568 \\
    10 & 47 & 47 & 12 \\\hline
    11 & 57 & 57 & 3521 \\
    12 & 69 & 69 & 40 \\
    13 & 81 & 81 & 1058 \\
    14 & 94 & 94 & 2104 \\
    15 & 109 & 109 & 668 \\\hline
    16 & 124 & 124 & 7540 \\
    17 & 140 & 140 & 1298 \\
    18 & 157 & 157 & 83648 \\
    19 & 175 & 175 & 16631833 \\
    20 & 195 & 195 & 70630 \\
  \end{array}
  \]  
  \caption{For each $n=1,\ldots,20$, the length of the optimal winning
    path (``length''), the bound on the length predicted by
    Theorem~\ref{thm:bounds} (``bound''), and the number of optimal
    paths (``count''). The first and last of these are sequences
  A375298 and A375299 in the On-Line Encyclopedia of Integer Sequences
  {\cite{OEIS}}.}
  \label{fig:path-lengths}
\end{figure}

\begin{figure}[p]
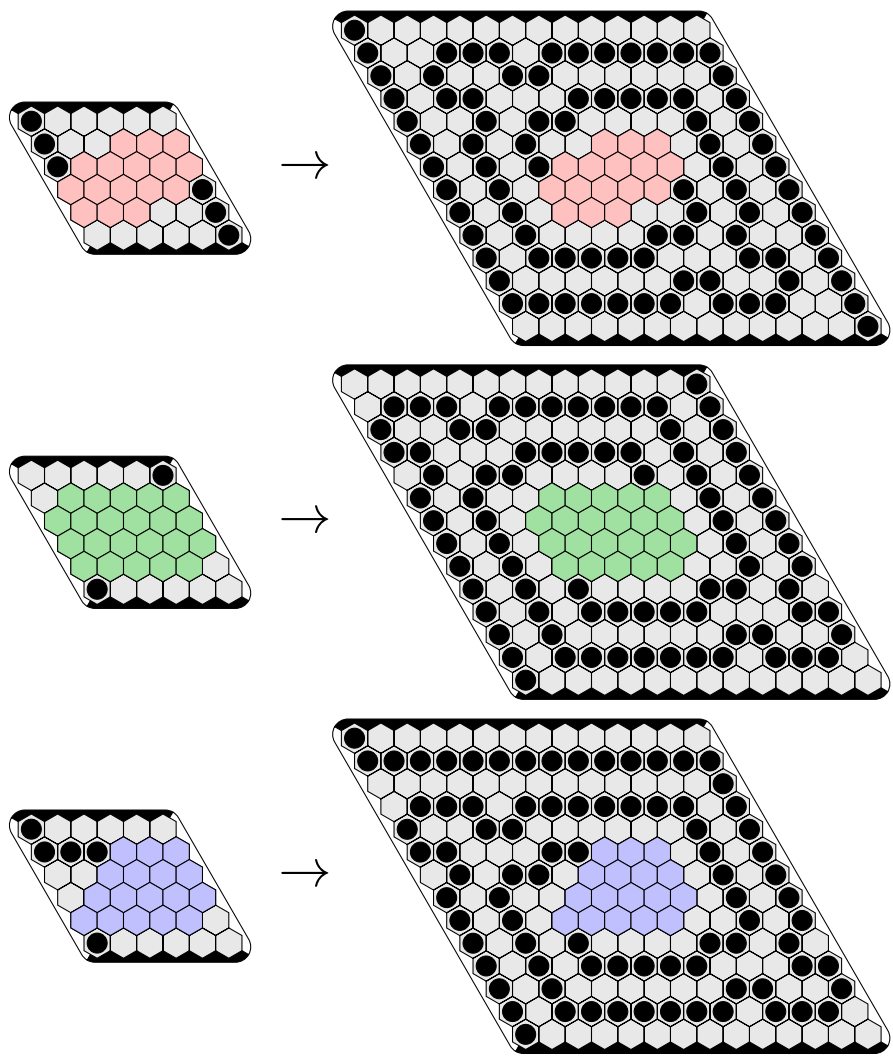

  \def\myarrow{\quad\scalebox{2}{$\to$}}
  \[
  \begin{hexboard}[scale=0.5, baseline={(current bounding box.center)}]
    \rotation{-30}
    \board(6,6)
    \black(1,1)\black(1,2)\black(1,3)
    \black(6,4)\black(6,5)\black(6,6)
    \cellcolor{cellpink}
    \foreach\i in {4,...,6} {\hex(\i,2)}
    \foreach\i in {2,...,6} {\hex(\i,3)}
    \foreach\i in {1,...,5} {\hex(\i,4)}
    \foreach\i in {1,...,3} {\hex(\i,5)}
  \end{hexboard}
  \myarrow
  \begin{hexboard}[scale=0.5, baseline={(current bounding box.center)}]
    \rotation{-30}
    \board(14,14)
    \cellcolor{cellpink}
    \foreach\i in {8,...,10} {\hex(\i,6)}
    \foreach\i in {6,...,10} {\hex(\i,7)}
    \foreach\i in {5,...,9} {\hex(\i,8)}
    \foreach\i in {5,...,7} {\hex(\i,9)}
    \black(1,1)\black(1,2)\black(1,3)\black(1,4)\black(1,5)
    \black(1,6)\black(1,7)\black(1,8)\black(1,9)\black(1,10)
    \black(1,11)\black(1,12)\black(1,13)\black(2,13)\black(3,3)
    \black(3,4)\black(3,6)\black(3,7)\black(3,8)\black(3,9)
    \black(3,10)\black(3,11)\black(3,13)\black(4,2)\black(4,4)
    \black(4,5)\black(4,11)\black(4,13)\black(5,2)\black(5,6)
    \black(5,7)\black(5,11)\black(5,13)\black(6,2)\black(6,3)
    \black(6,5)\black(6,11)\black(6,13)\black(7,3)\black(7,5)
    \black(7,11)\black(7,13)\black(8,2)\black(8,4)\black(8,10)
    \black(8,12)\black(9,2)\black(9,4)\black(9,10)\black(9,12)
    \black(9,13)\black(10,2)\black(10,4)\black(10,8)\black(10,9)
    \black(10,13)\black(11,2)\black(11,4)\black(11,10)
    \black(11,11)\black(11,13)\black(12,2)\black(12,4)\black(12,5)
    \black(12,6)\black(12,7)\black(12,8)\black(12,9)\black(12,11)
    \black(12,12)\black(13,2)\black(14,2)\black(14,3)\black(14,4)
    \black(14,5)\black(14,6)\black(14,7)\black(14,8)\black(14,9)
    \black(14,10)\black(14,11)\black(14,12)\black(14,13)
    \black(14,14)
  \end{hexboard}
  \]
  \[
  \begin{hexboard}[scale=0.5, baseline={(current bounding box.center)}]
    \rotation{-30}
    \board(6,6)
    \black(6,1)\black(1,6)
    \cellcolor{cellgreen}
    \foreach\i in {2,...,6} {\hex(\i,2)}
    \foreach\i in {1,...,6} {\hex(\i,3)}
    \foreach\i in {1,...,6} {\hex(\i,4)}
    \foreach\i in {1,...,5} {\hex(\i,5)}
  \end{hexboard}
  \myarrow
  \begin{hexboard}[scale=0.5, baseline={(current bounding box.center)}]
    \rotation{-30}
    \board(14,14)
    \cellcolor{cellgreen}
    \foreach\i in {6,...,10} {\hex(\i,6)}
    \foreach\i in {5,...,10} {\hex(\i,7)}
    \foreach\i in {5,...,10} {\hex(\i,8)}
    \foreach\i in {5,...,9} {\hex(\i,9)}
    \black(14,1)\black(2,2)\black(3,2)\black(4,2)\black(6,2)
    \black(7,2)\black(8,2)\black(9,2)\black(10,2)\black(11,2)
    \black(12,2)\black(14,2)\black(1,3)\black(4,3)\black(5,3)
    \black(12,3)\black(14,3)\black(1,4)\black(2,4)\black(6,4)
    \black(7,4)\black(8,4)\black(9,4)\black(10,4)\black(12,4)
    \black(14,4)\black(2,5)\black(4,5)\black(5,5)\black(10,5)
    \black(12,5)\black(14,5)\black(1,6)\black(3,6)\black(12,6)
    \black(14,6)\black(1,7)\black(3,7)\black(12,7)\black(14,7)
    \black(1,8)\black(3,8)\black(12,8)\black(14,8)\black(1,9)
    \black(3,9)\black(12,9)\black(14,9)\black(1,10)\black(3,10)
    \black(5,10)\black(10,10)\black(11,10)\black(13,10)
    \black(1,11)\black(3,11)\black(5,11)\black(6,11)\black(7,11)
    \black(8,11)\black(9,11)\black(13,11)\black(14,11)
    \black(1,12)\black(3,12)\black(10,12)\black(11,12)
    \black(14,12)\black(1,13)\black(3,13)\black(4,13)\black(5,13)
    \black(6,13)\black(7,13)\black(8,13)\black(9,13)\black(11,13)
    \black(12,13)\black(13,13)\black(1,14)
  \end{hexboard}
  \]
  \[
  \begin{hexboard}[scale=0.5, baseline={(current bounding box.center)}]
    \rotation{-30}
    \board(6,6)
    \black(1,1)\black(1,2)\black(2,2)\black(3,2)\black(1,6)
    \cellcolor{cellblue}
    \foreach\i in {4,...,6} {\hex(\i,2)}
    \foreach\i in {3,...,6} {\hex(\i,3)}
    \foreach\i in {2,...,6} {\hex(\i,4)}
    \foreach\i in {1,...,5} {\hex(\i,5)}
  \end{hexboard}
  \myarrow
  \begin{hexboard}[scale=0.5, baseline={(current bounding box.center)}]
    \rotation{-30}
    \board(14,14)
    \cellcolor{cellblue}
    \foreach\i in {8,...,10} {\hex(\i,6)}
    \foreach\i in {7,...,10} {\hex(\i,7)}
    \foreach\i in {6,...,10} {\hex(\i,8)}
    \foreach\i in {5,...,9} {\hex(\i,9)}
    \black(1,1)\black(1,2)\black(2,2)\black(3,2)\black(4,2)
    \black(5,2)\black(6,2)\black(7,2)\black(8,2)\black(9,2)
    \black(10,2)\black(11,2)\black(12,2)\black(13,2)\black(14,2)
    \black(14,3)\black(2,4)\black(3,4)\black(4,4)\black(6,4)
    \black(7,4)\black(8,4)\black(9,4)\black(10,4)\black(11,4)
    \black(12,4)\black(14,4)\black(1,5)\black(4,5)\black(5,5)
    \black(12,5)\black(14,5)\black(1,6)\black(2,6)\black(6,6)
    \black(7,6)\black(12,6)\black(14,6)\black(2,7)\black(4,7)
    \black(5,7)\black(12,7)\black(14,7)\black(1,8)\black(3,8)
    \black(12,8)\black(14,8)\black(1,9)\black(3,9)\black(12,9)
    \black(14,9)\black(1,10)\black(3,10)\black(5,10)\black(10,10)
    \black(11,10)\black(13,10)\black(1,11)\black(3,11)
    \black(5,11)\black(6,11)\black(7,11)\black(8,11)\black(9,11)
    \black(13,11)\black(14,11)\black(1,12)\black(3,12)
    \black(10,12)\black(11,12)\black(14,12)\black(1,13)
    \black(3,13)\black(4,13)\black(5,13)\black(6,13)\black(7,13)
    \black(8,13)\black(9,13)\black(11,13)\black(12,13)
    \black(13,13)\black(1,14)
  \end{hexboard}
  \]
  \caption{Recursive construction of winning paths for $n>20$. Given a
    winning path of length $k$ for board size $n$ of one of the types
    shown, construct a winning path of length $k+8n+30$ for board size
    $n+8$.}
  \label{fig:witnesses-extend}
\end{figure}
\clearpage 

\section{Enumeration}

The optimal winning paths in Figure~\ref{fig:witnesses} and the path
counts in Figure~\ref{fig:path-lengths} were computed by exhaustive
search. The reader may perhaps wonder how such searches can be
performed. A first idea is to enumerate all possible paths in a
depth-first search, starting with paths of length 1 and then
repeatedly extending them by one stone in every possible way, as
indicated schematically in Figure~\ref{fig:enum-naive}(a).
\begin{figure}
  \[
  (a)\quad
  \begin{hexboard}[scale=0.5, baseline={(0,0)}] 
    \rotation{-30}
    \board(10,10)
    \black(4,1)\black(4,2)\black(7,2)\black(8,2)\black(9,2)
    \black(10,2)\black(3,3)\black(6,3)\black(10,3)\black(3,4)
    \black(4,4)\black(5,4)\black(10,4)\black(6,5)\black(7,5)
    \black(9,5)\black(5,6)\black(7,6)\black(8,6)\black(4,7)
    \cell(3,7)\label{\lbl{?}}
    \cell(3,8)\label{\lbl{?}}
    \cell(4,8)\label{\lbl{?}}
  \end{hexboard}
  \hspace{-1em}
  (b)\quad
  \begin{hexboard}[scale=0.5, baseline={(0,0)}] 
    \rotation{-30}
    \foreach\i in {1,...,5} {
      \foreach\j in {1,...,10} {
      \hex(\j,\i)
      }
    }
    \edge(1,1)(10,1)
    \edge[\noobtusecorner](1,1)(1,4.5)
    \edge[\noacutecorner](10,1)(10,5)
    \black(1,1)\black(1,2)\black(2,2)\black(3,2)\black(4,2)
    \black(5,2)\black(6,2)\black(7,2)\black(8,2)\black(9,2)
    \black(10,2)\black(10,3)\black(2,4)\black(3,4)\black(4,4)
    \black(6,4)\black(7,4)\black(8,4)\black(10,4)\black(1,5)
    \black(4,5)\black(5,5)\black(8,5)\black(10,5)
    \draw[ultra thick] \coord(1,0.5) -- \coord(1,2) -- \coord(10,2) -- \coord(10,5) -- \coord(9,6);
    \draw[ultra thick] \coord(1,6) -- \coord(1,5) -- \coord(2,4) -- \coord(4,4) -- \coord(4,5) -- \coord(5,5) -- \coord(6,4) -- \coord(8,4) -- \coord(8,5) -- \coord(7,6);
  \end{hexboard}
  \]
  \caption{Enumeration of winning paths. (a) Naive method. (b)
    Row-wise method.}
  \label{fig:enum-naive}
\end{figure}
The problem with this naive enumeration method is that it is very
slow. The number of paths grows exponentially with the board size, and
even if we take some measures to avoid paths that have no chance of
becoming optimal (such as those that enclose a large number of useless
triangles), the runtime remains exponential. This method works for
sufficiently small boards, but is not feasible for sizes much beyond
$10\times 10$.

The key to efficient enumeration is to consider partial boards, rather
than partial paths. Consider the partial board in
Figure~\ref{fig:enum-naive}(b). It holds a number of path segments
that may (or may not) become part of a longest winning path. The lower
boundary of this region is formed by a row of 10 cells. Some of these
boundary cells are occupied by stones, and others are empty. Some of
the stones have a ``path stub'' coming out of them. These path stubs
have a direction, pointing left or right. Within the region, each of
the path stubs is either connected to a unique other path stub or to
the top board edge. We call all of this information (which cells along
the bottom row are occupied, the set of path stubs and their
directions, and the connectivity between the path stubs) the
\emph{boundary type} of the partial board.  The key observation is
that for a fixed board width (but arbitrary height), there are only
finitely many possible boundary types.  For each boundary type and
each height of partial boards, we only need to enumerate three pieces
of information: (1) the largest number of stones in any arrangement
for this boundary type and height, (2) how many arrangements exist
that use this largest number of stones, and (3) one example of such an
arrangement. Once we have collected this information for partial
boards of height $k$, it is easy to consider all possible ways of
adding one more row of stones, and infer the corresponding information
for partial boards of height $k+1$. Some care is required, for example,
to avoid creating disconnected cycles (this is why we must include
connectivity information in the boundary type). But with appropriate
attention to detail, this method allows us to enumerate all optimal
arrangements for all boundary types and all heights, provided that the
board width is fixed. Moreover, for any fixed board width, the runtime
scales linearly with the height.

An obvious limitation of the row-wise enumeration method is that the
runtime still grows exponentially as a function of the board
\emph{width}. However, the method is efficient enough to handle boards
of width up to 20 and arbitrary height. Moreover, there is an
analogous column-wise method that can be used for boards of height up
to 20 and arbitrary width.  The information in
Figures~\ref{fig:witnesses} and {\ref{fig:path-lengths}} was computed
using these methods.

\section{Conclusions and future work}

We have determined the length of the longest winning path on Hex
boards of size $n\times n$, for all $n$. More generally, one may ask
the same question for non-rhombic boards, i.e., boards of size
$m\times n$. While it is possible that our methods can shed some light
on this more general question, we have left it for future work. Even
more generally, one can ask for longest induced paths in other types
of grids and graphs. But as noted in the introduction, the problem is
in general NP-hard, so one can only expect to solve it in special
cases.

\section*{Acknowledgements}

This work was supported by the Natural Sciences and Engineering
Research Council of Canada (NSERC). I would like to thank the
anonymous referees for their thoughtful suggestions and
corrections. Special thanks to Eric Demer, who was the first to
compute all solutions up to $n=10$ and inspired me to do this work.

\bibliographystyle{abbrv}
\bibliography{hex-path}

\end{document}